\documentclass[12pt]{amsart}
\usepackage{amsmath}
\usepackage{amsfonts}
\usepackage{amssymb}
\usepackage{amsthm}
\usepackage[all]{xy}
\usepackage{color}
\usepackage{verbatim}
\usepackage{graphicx}
\usepackage{float}

\setlength{\textheight}{23cm}
\setlength{\textwidth}{16cm}
\setlength{\topmargin}{-0.8cm}
\setlength{\parskip}{1 em}

\hoffset=-1.4cm

\newtheorem{theorem}{Theorem}[section]
\newtheorem{lemma}[theorem]{Lemma}

\newtheorem{corollary}[theorem]{Corollary}
\newtheorem{definition}[theorem]{Definition}

\numberwithin{equation}{section}

\baselineskip=15pt

 \author[lokenath Kundu]{Lokenath Kundu}
 
  \address{SRM, A.P.}
 
 \email{bholaktw2010@gmail.com}

    \address{SRM, A.P.}
  
  %\subjclass[2010]{20F38,57S25. 57R18, 57M60, 30F35, 20H10, 20D05.}
  %\keywords{Projective special linear group, , growth of a group, Riemann surface, signature, minimum genus, irreducible character.}

\title[Growth of a Family of Finite Simple Groups] {Growth of a Family of Finite Simple Groups}

\begin{document}
\maketitle
\section{Abstract}
We consider the growth of an infinite family of finite groups. We are motivated by the remarkable contribution of Bass, Wolf, Milnor, Gromov,
Grigorchuk on the word growth and structure of infinite groups, and the results of Black on the word growth of an infinite family of finite groups. We follow the definition of the word
growth for families of finite groups as given by Black, and compute the growth of a family of finite linear fractional groups. Some developments are analogous to infinite
cases. However, contrasts are also transcribed, as well as other results.

\section{Introduction}
\noindent Let $G$ be an infinite group, and $ S=~\lbrace s_1,s_2,\dots,s_n ~\rbrace$ be a finite generating set of $G$. So, any element $g \in G$ can be expressed as $g=s_{i_1}^{\pm j_1}.s_{i_2}^{\pm j_2}\dots s_{i_k}^{\pm j_k}$ where $s_{i_1},~s_{i_2},\dots,s_{i_k} \in S$ and $j_1,j_2,\dots,j_k$ are positive integer. We consider here the cancellation free expression of $g$. Then the length of an element $l(g):= j_1+j_2+\dots+j_k$. The growth function $\gamma(n)$ for the group $G$ is the cardinality of the collection of those elements of length less than or equal to $n$. The growth series is defined as $f(z)= a_0+a_1z+\dots+a_n z^n+\dots$ where $a_n= |\lbrace g \in G| l(g)=n \rbrace|$. Clearly $\gamma(n)=a_0+a_1+\dots+a_n$. The growth of infinite group is independent of the generating set. Earler study of the growth series was exercised about the asymptotic classification of the coefficient of the growth seies \cite{polgro}. The growth of solvable groups is well understood, and in \cite{mil1, wolf} author attested that the solvable groups either has the polynomial growth or exponential growth. The polynomial growth of a group was completely characterized by Gromov in \cite{gro}. He proved that a group has polynomial growth if and only if it is virtually nilpotent. The first group which has intermediate growth $(i.e.$ the group neither has polynomial nor exponential growth) was introduced by Grigorchuk in \cite{grigo}. He constructed a finitely generated infinite group which is not finitely presented and showed that the group has intermediate growth. When we have one group which has intermediate growth, then we can construct infinitely many groups having intermediate groups, by taking direct products with itself.  In \cite{mann} we have witnessed infinitely many groups which have intermediate groeth. The theory of growth of infinite groups are well understood, and it also has a connection with the growth of a family of finite groups. \\
\noindent The growth of a family of finite groups $\mathcal{G}:=(G_i)$ with respect to some generating set $\mathcal{S}=(S_i)$ for $i \in I$  is defined as $\gamma_{\mathcal{S}}^{\mathcal{G}} ~ = ~\smash{\displaystyle \max_{i ~ \in ~ I}}(\gamma_{S_i}^{G_i})$ for all $n ~ \in ~ \mathbb{N}$. The above notion of the growth function for a family of finite groups makes sense as $\mathcal{S}$ is a bounded sized generating family and hence, for every n,  $\smash{\displaystyle \max_{i \in I}}(\gamma_{S_i}^{G^i}(n)) $ is attained. Consider the family of cyclic groups $\mathcal{G}~:=(G_i)_{i \in I}$ of order $p_i q_i$, where $p_i,~q_i$ are coprime to each other, and $q_i=p_i+1$. Now if we consider the generating family $S_i= \langle a_i\rangle$ of $G_i ~ \in \mathcal{G}$, then we have linear growth. If we consider the generating set $S_i=~ \langle ~ \lbrace x_i,y_i\rbrace ~ \rangle$ where $|x_i|=p_i$ and $|y_i|=q_i$, then we have quadratic growth. Here $|x|$ denote the order of an element in a group $G$. Contrasting to the theory of the growth of infinite groups, the growth of a family of finite groups is dependent of generating set. There is a one to one group theoretic correspondence between residually finite groups and their finite quotients. In \cite{mann1, mann3, black}, the connections has been well portraied.
In fact, we have a finitely generated residually finite group $G$ and finite generating set $S$. For a given family of groups $\mathcal{G}_i$, consisting of all finite quotients of $G$, with respective generating sets $\mathcal{S}_i$. Here $\mathcal{S}_i$ is the epimorphic image of $S$ under the mapping $G ~ \rightarrow ~ \mathcal{G}_i$. Then it is an obvious observation that $\gamma_S^G$ and $\gamma_{\mathcal{S}_i}^{\mathcal{G}_i}$ act as the same function \label{connection}. For more details on the growth of finite groups see \cite{black}. \\
\noindent One of the foundational upshots that display the neatness of the concept of the growth of group is due to A.S. Schwarz. In 1957 he proved
\begin{theorem}
Let $\tilde{M}$ be the universal cover of a compact Riemannian manifold $M$. Then
the rate of growth of $M$ is equal to the rate of growth of the fundamental group $\pi_1(M)$.
\end{theorem}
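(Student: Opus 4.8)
The plan is to realize this statement as an instance of what is now called the \v{S}varc--Milnor lemma. Equip the universal cover $\tilde{M}$ with the pullback Riemannian metric, so that the covering map $\tilde{M} \to M$ is a local isometry and $\pi_1(M)$ acts on $\tilde{M}$ by deck transformations, hence by isometries. Because $M$ is compact, this action is \emph{cocompact} (the quotient is $M$) and \emph{properly discontinuous} (the covering is normal and the action free). Fix a basepoint $\tilde{x}_0 \in \tilde{M}$. Using compactness of $M$, choose $R>0$ so large that the $\pi_1(M)$-translates of the closed ball $\bar{B}(\tilde{x}_0,R)$ cover $\tilde{M}$, and set $S := \{\, g \in \pi_1(M) : g \neq 1,\ d(\tilde{x}_0, g\tilde{x}_0) \le 2R \,\}$. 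Proper discontinuity forces $S$ to be finite, and a standard argument --- walk along a minimal geodesic from $\tilde{x}_0$ to $g\tilde{x}_0$ and jump successively between overlapping translates of $\bar{B}(\tilde{x}_0,R)$ --- shows that $S$ generates $\pi_1(M)$. This is the first technical step.

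Next I would show that the orbit map $\varphi : (\pi_1(M), d_S) \to (\tilde{M}, d)$, $g \mapsto g\tilde{x}_0$, is a quasi-isometry: there exist constants $A\ge 1$ and $B\ge 0$ with
\[
A^{-1}\, d_S(1,g) - B \;\le\; d(\tilde{x}_0,\, g\tilde{x}_0) \;\le\; A\, d_S(1,g) + B
\]
for every $g \in \pi_1(M)$, and $\varphi$ is coarsely onto by the choice of $R$. The upper bound is immediate from the triangle inequality together with $d(\tilde{x}_0, s\tilde{x}_0) \le 2R$ for $s \in S$. The lower bound is the heart of the matter: given $g$, subdivide a minimal geodesic from $\tilde{x}_0$ to $g\tilde{x}_0$ into about $d(\tilde{x}_0, g\tilde{x}_0)/R$ segments, cover each subdivision point by some translate $g_i \bar{B}(\tilde{x}_0,R)$, and check that each $g_i^{-1} g_{i+1}$ lies in $S$, so that $d_S(1,g)$ is bounded by a constant multiple of $d(\tilde{x}_0, g\tilde{x}_0)$ up to a bounded additive error. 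This quasi-isometry statement is the main obstacle; the remainder is bookkeeping.

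Finally I would translate quasi-isometry into equality of growth rates. Comparing the group growth function $\gamma^{\pi_1(M)}_S(n) = \#\{\, g : d_S(1,g) \le n \,\}$ with the volume-growth function $V(n) = \operatorname{vol}\bigl(\bar{B}(\tilde{x}_0, n)\bigr)$ of the universal cover, the quasi-isometry places the finite orbit $\{\, g\tilde{x}_0 : d_S(1,g) \le n \,\}$ inside $\bar{B}(\tilde{x}_0, An+B)$, while its $R$-neighbourhood covers $\bar{B}(\tilde{x}_0, n-R)$; since distinct orbit points are separated by a fixed $\delta>0$ (the minimal displacement of the free, properly discontinuous action) one obtains two-sided estimates
\[
c_1\, \gamma^{\pi_1(M)}_S(n-R) \;\le\; V(n) \;\le\; c_2\, \gamma^{\pi_1(M)}_S(An+B),
\]
where $c_1,c_2>0$ depend only on the dimension, a uniform lower bound for the volume of $R$-balls, and a uniform upper bound --- a packing estimate, finite again by proper discontinuity --- for the number of $\delta$-separated points in a $2R$-ball. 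Hence $\gamma^{\pi_1(M)}_S$ and $V$ are equivalent in the usual sense, i.e. they define the same \emph{rate of growth}. Since the growth of the infinite group $\pi_1(M)$ is independent of the finite generating set (as recalled above), and the rate of growth of $M$ is by definition that of $V$, the equality follows. I would close by noting that the same argument shows the rate of growth is independent of the choice of Riemannian metric on the compact manifold $M$.
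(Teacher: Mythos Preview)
Your argument is the standard \v{S}varc--Milnor proof and is correct in outline; the quasi-isometry between the orbit map and the Riemannian distance is exactly the right mechanism, and your volume-counting step is the usual way to translate it into equivalence of growth functions.

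However, there is nothing to compare against: the paper does \emph{not} prove this theorem. It is stated in the Introduction as a classical 1957 result of A.~S.~Schwarz, purely for motivational purposes, and no proof is supplied anywhere in the text. The paper's own technical work begins later with the material on Fuchsian groups and signatures of $PSL_2(\mathbb{F}_p)$. So your proposal is a genuine proof where the paper offers only a citation; your approach is in fact essentially Schwarz's original argument (now packaged as the \v{S}varc--Milnor lemma), so in that sense it agrees with the attributed source even though the paper itself gives no details.
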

  
\noindent The growth of Riemmanian manifold $M$ is defined as the growth of the function $\gamma(r)= ~\\ Vol(B_x(r))$ as $r \rightarrow \infty$. Here $Vol(B_x(r))$ denotes the volume of a ball of radius $r$, centered at a fixed point $x \in M$. The rate of growth of Riemannian manifold is independent of the choice of the point $x \in M$. The theory of the growth of groups is profoundly associated with the other branches of mathematics like geometry, and topology and that is perspicuous from the above theorem. \\
\noindent There is a correspondene between the growth of infinite groups and a family of finite groups [\ref{connection}]. In \cite{black} author posed the following conjecture, \label{conj} "Every family of finite nonabelian simple groups is a
totally exponential family." We will address the above question partially, and give an affirmative answer for the family of non abelian finite simple groups $(PSL_2(\mathbb{F}_p))$ for $p \equiv 3~(mod ~ 4)$. $PSL_2(\mathbb{F}_p)$ is the collection of $2 \times 2$ matrices over the fields  $\mathbb{F}_{p}$ with diterminant $1$, modulo its center. The order of the group $PSL_2(\mathbb{F}_p)$ is $\frac{p(p^2-1)}{2}$. The order of the elements of the group $PSL_2(\mathbb{F}_p)$ are $p,~2,~3,~4,~\text{or}~5,~d $ and a divisor of either $\frac{p-1}{2}$ or $\frac{p+1}{2}$ where $d$ is defined as $d= min \lbrace ~ e| ~ e \geq 7 \text{ and either } e| \frac{p-1}{2} \text{ or } ~ e| \frac{p+1}{2}  \rbrace$.  
The complete list of maximal proper subgroups of $PSL_2(\mathbb{F}_p)$ is given as following \cite{sjerve} 
\begin{itemize}
\item[1.] dihedral group of order $p-1$ or $p+1$.
\item[2.] solvable group of order $\frac{p.(p-1)}{2}$.
\item[3.] $A_4$ if $p \equiv 3,13,27,37 ~ (mod ~ 40)$.
\item[4.] $S_4$ if $p \equiv \pm 1 ~ (mod ~ 8)$.
\item[5.] $A_5$ if $p \equiv \pm 1 ~ (mod ~ 5)$.
\end{itemize}
\noindent Availing one self to the above information about $PSL_2(\mathbb{F}_p)$, we compute finite generating sets of $PSL_2(\mathbb{F}_p)$ establishing  surface-kernel epimorphism map from Fuchsian group to $PSL_(\mathbb{F}_p)$.\\
\noindent Fuchsian groups are the discrete subgroups of $PSL_2(\mathbb{R})$ \cite{sve}. We consider only the cocompact Fuchsian groups $(i.e.$ the Fuchsian group $\Gamma$ has a compact fundamental region $F$). Hence, $F$ has finitely many sides, and hence finitely many vertices, finitely many elliptic cycles, and by $Theorem ~ 3.5.2$ \cite{sve}, a finite number of periods say      $m_1,~m_2,\dots,~m_r$. The quotient space $\mathcal{H}/\Gamma$ is an orbifold $(i.e.$ a compact connected orientable surface of genus $h$ with exactly $r$ marked points). Then $(h;m_1,~m_2,\dots,m_r)$ is known as the signature of the Fuchsian group $\Gamma$. In \cite{gof} authors computed that the growth function of Fuchsian groups are rational function, and it is depending on the number of edges and the angle between the edges. They also specified that the growth function is independent of the identification of the edges of the fundamental region. The growth function will allow us to compute the growth of the family of finite groups $(PSL_2(\mathbb{F}_p))$ for $p \equiv 3~(mod ~ 4)$. The signature of a finite group $G$ is defined in the following way, and it is intently associated with the notion of signature of Fuchsian groups.\\
\noindent Let $\Sigma_g$ be a compact, connected, orientable Riemann surface of genus $g~\geq ~ 0$. Any orientation preserving action of a finite group $G$ on a Riemann surface $\Sigma_g$ of genus $g$ gives an orbit space $\Sigma_h ~ := \Sigma_g/G$, and the orbit space $\Sigma_h$ is again a Riemann surface possibly with some marked points. The quotient map $p~:~\Sigma_g~\rightarrow~\Sigma_h$ is a branched covering map. Let $B=~\lbrace c_1,c_2,\dots,c_r~ \rbrace$ be the set of all branch points in $\Sigma_h$ and $A:=p^{-1}(B)$. Then  $p:~\Sigma_g \setminus A ~\rightarrow ~\Sigma_h \setminus B$ is a proper covering. The tuple $(h;m_1,m_2,\dots,m_r)$ is known as signature of the finite group $G$, where $m_1,m_2,\dots,m_r$ are the order of stabilizer of the preimages of the branch points  $c_1,c_2,\dots,c_r$ respectively. We can take this action as conformal action, that means the action is analytic in some complex structure on $\Sigma_g$, as the positive solution of Nielson Realization problem \cite{niel,eck} implies that if any group $G$ acts topologically on $\Sigma_g$ then it can also act conformally with respect to some complex structure. \\
\noindent By Riemann-Hurwitz formula we have $$ (g-1)=~|G|(h-1)+\frac{|G|}{2}\sum_{i=1}^r(1-\frac{1}{m_i}) .$$ The signature of a group encodes the information of the group action of a Riemann surface and vice versa.\\
\noindent For convenience, we write $(h;2^{[2]},3^{[1]})$ for the signature $(h;2,2,3)$. In general we use the notation $(h;~ 2^{[a_2]},~ 3^{[a_3]},~ 4^{[a_4]},~ 5^{[a_5]},~ d^{[a_d]},~ \frac{p-1}{2}^{[a_{\frac{p-1}{2}}]},~ \frac{p+1}{2}^{[a_{\frac{p+1}{2}}]},~ p^{[a_p]})$ for a signature of $PSL_2(\mathbb{F}_p)$. \\
\noindent Below we state our main theorem
\begin{theorem}\label{main}
The family of finite, simple, and non abelian groups  $(PSL_2(\mathbb{F}_p))$ has the exponential growth for $p \equiv 3~(mod 4)$.
\end{theorem}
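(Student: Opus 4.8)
The plan is to realise the growth of $(PSL_2(\mathbb{F}_p))$ as a ``degeneration'' of the growth of a fixed Fuchsian group, exploiting the elementary fact that a finite quotient reproduces the growth of the covering group up to half the systole of the kernel. Concretely: using the list of element orders recalled above together with the maximal-subgroup list of \cite{sjerve} (and Macbeath-type results on generating $PSL_2$ by elements of prescribed orders), one first fixes, for every prime $p\equiv 3\pmod 4$, a \emph{cocompact} Fuchsian group $\Gamma$ --- the same $\Gamma$, or at worst one drawn from a finite list, for all such $p$ --- equipped with a surface-kernel epimorphism $\phi_p\colon\Gamma\twoheadrightarrow PSL_2(\mathbb{F}_p)$; the congruence hypothesis on $p$ is precisely what makes the relevant signature admissible (it pins down the parities of $\tfrac{p-1}{2},\tfrac{p+1}{2}$, equivalently makes $-1$ a non-square). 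Taking $\Gamma$ arithmetic we may assume it sits inside a fixed $\mathbb{Z}$-order of a quaternion algebra over $\mathbb{Q}$, so that the $\phi_p$ are reductions modulo $p$, $\ker\phi_p=\Gamma(p)$ is the principal congruence subgroup, and $\phi_p$ is onto for all large $p$ by strong approximation. Finally fix a finite generating set $S$ of $\Gamma$ adapted to a fundamental polygon (the one for which \cite{gof} computes the rational growth series) and set $S_p:=\phi_p(S)$, $\mathcal S:=(S_p)$.

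The second step is the comparison of balls. For every $n$, the ball of radius $n$ in $\mathrm{Cay}(PSL_2(\mathbb{F}_p),S_p)$ equals $\phi_p(B_\Gamma(n))$, and $\phi_p$ is injective on $B_\Gamma(n)$ as soon as no nontrivial element of $\Gamma(p)$ has $S$-length $\le 2n$; hence
\[
\gamma^{PSL_2(\mathbb{F}_p)}_{S_p}(n)=\gamma^{\Gamma}_{S}(n)\qquad\text{whenever } 2n<\mathrm{sys}_S\big(\Gamma(p)\big).
\]
Since the coordinates of $\gamma\in\Gamma$ in a fixed basis of the order grow at most exponentially in $l_S(\gamma)$, a nontrivial $\gamma\in\Gamma(p)$ --- which is $\equiv\pm1$ modulo $p$ but, by faithfulness of $\Gamma\hookrightarrow PSL_2(\mathbb{R})$, not $\pm1$ in characteristic $0$ --- has a nonzero coordinate divisible by $p$, forcing $p\le C^{\,l_S(\gamma)}$, i.e. $l_S(\gamma)\ge c\log p$. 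Thus $\mathrm{sys}_S(\Gamma(p))\ge c\log p\to\infty$; equivalently, for each fixed $n$ the map $\phi_p$ is eventually injective on the finite set $B_\Gamma(n)$.

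For the conclusion: a cocompact Fuchsian group is non-elementary, so by \cite{gof} its growth series is rational with radius of convergence $1/a<1$, whence $\gamma^{\Gamma}_S(n)\ge a^{\,n}$ for a fixed $a>1$ and all $n$. Combining with the previous step, given $n$ choose $p\equiv 3\pmod 4$ with $c\log p>2n$, so that $\gamma^{\mathcal G}_{\mathcal S}(n)\ge\gamma^{PSL_2(\mathbb{F}_p)}_{S_p}(n)=\gamma^{\Gamma}_S(n)\ge a^{\,n}$. Hence $(PSL_2(\mathbb{F}_p))_{p\equiv 3(4)}$ has exponential growth with respect to $\mathcal S$; moreover, since its growth function agrees with that of $\Gamma$ on the interval $n<\tfrac{c}{2}\log p$, the family's growth series coincides coefficient-by-coefficient, in the limit $p\to\infty$, with the explicit rational series of $\Gamma$.

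The main obstacle is the first step: exhibiting a single (or uniformly bounded) arithmetic Fuchsian group carrying a surface-kernel epimorphism onto $PSL_2(\mathbb{F}_p)$ for \emph{all} $p\equiv 3\pmod 4$, with surjectivity (Macbeath / strong approximation, using the maximal-subgroup list to exclude proper images) and torsion-freeness of the kernel holding uniformly in $p$ --- this is exactly where the $p\equiv 3\pmod 4$ hypothesis is consumed, and it is the source of the restriction to a subfamily of primes. A lesser technical point is reconciling the polygon-adapted generating set used by \cite{gof} with its image in $PSL_2(\mathbb{F}_p)$. Finally, upgrading ``exponential'' to Black's ``totally exponential'' (the same conclusion for \emph{every} bounded generating family) would require showing that an arbitrary bounded generating family presents $PSL_2(\mathbb{F}_p)$ as a Fuchsian quotient of uniformly bounded complexity --- plausible for large $p$, since then the generator orders and the orders of their products are generically large, forcing the associated polygon group to be hyperbolic --- together with a uniform lower bound on the growth rates of these Fuchsian groups; this is the harder direction, only partially settled here.
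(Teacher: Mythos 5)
Your overall strategy coincides with the paper's: realize each $PSL_2(\mathbb{F}_p)$, $p\equiv 3\pmod 4$, as a quotient of a cocompact Fuchsian group $\Gamma$, invoke Floyd--Plotnick \cite{gof} for the exponential (rational) growth of $\Gamma$, and transfer the lower bound to the family through the generating sets $S_p=\phi_p(S)$. The two implementations differ in both halves. For the existence of the epimorphisms --- the step you flag as the main obstacle --- the paper does not use arithmeticity or strong approximation; it proves directly, via character-sum computations in the character table of $PSL_2(\mathbb{F}_q)$ together with Harvey's criterion and an extension principle, that $(h;3)$ (and a large class of signatures) is a signature of $PSL_2(\mathbb{F}_p)$ for every $p\equiv 3\pmod 4$, which produces the surface-kernel epimorphisms $\Gamma(h;3)\twoheadrightarrow PSL_2(\mathbb{F}_p)$ uniformly in $p$; note that for the growth argument alone the surface-kernel property is irrelevant, so your strong-approximation route is viable and arguably lighter. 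For the transfer step the roles are reversed: the paper merely appeals to the ``obvious observation'' that a residually finite group and the family of its finite quotients share the same growth function, an assertion that is immediate only for the family of \emph{all} finite quotients and not for the restricted subfamily $(PSL_2(\mathbb{F}_p))_{p\equiv 3\,(4)}$. Your systole estimate $\mathrm{sys}_S(\ker\phi_p)\geq c\log p$, which gives injectivity of $\phi_p$ on $B_\Gamma(n)$ once $2n<c\log p$ and hence $\max_p\gamma^{PSL_2(\mathbb{F}_p)}_{S_p}(n)\geq\gamma^{\Gamma}_S(n)\geq a^{n}$, is precisely the justification the paper omits; it does, however, depend on the $\phi_p$ being reduction maps, so if one instead uses the paper's abstractly constructed epimorphisms one still owes an argument that for each $n$ some kernel in the family avoids $B_\Gamma(2n)\setminus\{1\}$. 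Finally, you are right that both arguments establish exponential growth only for the exhibited generating family, which is weaker than the ``totally exponential'' property in Black's conjecture that motivates the paper.
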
 
\noindent To prove our main theorem we have to prove the following key lemma
\begin{lemma}
$(h_{\geq ~ 0};~ 2^{[a_2]},~ 3^{[a_3]},~ 4^{[a_4]},~ 5^{[a_5]},~ d^{[a_d]},~ \frac{p-1}{2}^{[a_{\frac{p-1}{2}}]},~ \frac{p+1}{2}^{[a_{\frac{p+1}{2}}]},~ p^{[a_p]})$ is a signature for $PSL_2(\mathbb{F}_p)$ for $P ~ \equiv ~ 3 ~ (mod ~ 4)$ if and only if $$2(h-1)+~\frac{a_2-1}{2}~ + \frac{2a_3-1}{3} + ~ \frac{3a_4}{4} +~ \frac{4a_5}{5} +~ \frac{(d-1)a_d+1}{d} ~+ \frac{a_{\frac{p-1}{2}}(p-3)}{p-1} ~+ \frac{a_{\frac{p+1}{2}}(p-1)}{p+1} $$ $$+\frac{(p-1)a_p}{p} ~ \geq 0 \text{ or }$$  $$20(h-1) ~ + 10[\frac{a_2}{2} ~ +\frac{2.a_3}{3} ~+\frac{3.a_4}{4} ~+\frac{4.a_5}{5} ~+\frac{(d-1)a_d}{d} ~+\frac{(p-3)a_{\frac{p-1}{2}}}{p-1} ~+$$ $$\frac{(p-1)a_{\frac{p+1}{2}}}{p+1} ~+\frac{(p-1)a_p}{p} ] ~ \geq ~ 1 $$ when $p ~ \geq ~ 13, ~ p ~ \equiv ~ \pm ~ 1(\mod ~ 5),~ p ~ \not \equiv ~ \pm ~ 1(\mod ~ 8), ~ \text{and} ~ d \geq 15$.
\end{lemma}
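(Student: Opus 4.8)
\noindent The plan is to treat the tuple $\sigma=(h;2^{[a_2]},3^{[a_3]},4^{[a_4]},5^{[a_5]},d^{[a_d]},(\tfrac{p-1}{2})^{[a_{(p-1)/2}]},(\tfrac{p+1}{2})^{[a_{(p+1)/2}]},p^{[a_p]})$ as the signature of a cocompact Fuchsian group $\Gamma_\sigma$ and to use the standard dictionary (Riemann existence theorem, surface--kernel epimorphisms) that $\sigma$ is the signature of a $PSL_2(\mathbb{F}_p)$--action on some closed surface $\Sigma_g$ if and only if there is an epimorphism $\phi\colon\Gamma_\sigma\twoheadrightarrow PSL_2(\mathbb{F}_p)$ taking each elliptic generator $x_i$ to an element of order exactly $m_i$ (a ``$\sigma$--generating vector''). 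Writing $\mu(\Gamma_\sigma)=2(h-1)+\sum_i(1-\tfrac1{m_i})$ for the co--area and expanding $\sum_i(1-\tfrac1{m_i})$ in terms of the exponents $a_2,\dots,a_p$, one checks that the two displayed inequalities are exactly $\mu(\Gamma_\sigma)\ge\tfrac56-\tfrac1d$ and $\mu(\Gamma_\sigma)\ge\tfrac1{10}$; since $\tfrac56-\tfrac1d>\tfrac1{10}$ for $d\ge15$, the disjunction is equivalent to the single bound $\mu(\Gamma_\sigma)\ge\tfrac1{10}$, which in the extreme case $d=15$ equals $\mu(0;2,3,d)$. I would prove both implications against this reformulation, using that $|PSL_2(\mathbb{F}_p)|=\tfrac{p(p^2-1)}{2}$ and the list of element orders $\{2,3,4,5,d,\tfrac{p-1}{2},\tfrac{p+1}{2},p\}$ recalled above.

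\noindent For necessity ($\Rightarrow$): if $\sigma$ is realized, then $\Gamma_\sigma$ is a genuine Fuchsian group, so $\mu(\Gamma_\sigma)>0$, and Riemann--Hurwitz gives $2g-2=|PSL_2(\mathbb{F}_p)|\,\mu(\Gamma_\sigma)$, whence $g\ge0$. The sharper bound $\mu(\Gamma_\sigma)\ge\tfrac1{10}$ comes from the admissible orders together with $d\ge15$: the positive--co--area signatures that actually carry a $\sigma$--generating vector cannot be smaller than $(0;2,3,d)$, of co--area $\tfrac16-\tfrac1d\ge\tfrac1{10}$, so any realized $\sigma$ satisfies $\mu(\Gamma_\sigma)\ge\tfrac1{10}$; clearing denominators and recording the sub--case in which a period $d$ occurs rewrites this as the two inequalities. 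This direction is essentially bookkeeping once the arithmetic of $|PSL_2(\mathbb{F}_p)|$ is fixed.

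\noindent For sufficiency ($\Leftarrow$): assume $\mu(\Gamma_\sigma)\ge\tfrac1{10}$, so $\Gamma_\sigma$ is Fuchsian, and construct a $\sigma$--generating vector. Every period $m_i\in\{2,3,4,5,d,\tfrac{p-1}{2},\tfrac{p+1}{2},p\}$ is an actual element order, so the required elements exist. If $h\ge1$ there is abundant room: $PSL_2(\mathbb{F}_p)$ is perfect, $2$--generated, and (Ore) every element is a commutator, so one can fix a generating pair $a_1,b_1$, push all balancing into the relator $\prod_j[a_j,b_j]\prod_i x_i=1$ by standard flexibility arguments, and read off $g=1+\tfrac12|PSL_2(\mathbb{F}_p)|\,\mu(\Gamma_\sigma)\in\mathbb{Z}_{\ge0}$. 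If $h=0$, I would reduce the number of periods to $r=3$ by repeatedly replacing two consecutive $x_j,x_{j+1}$ by a single element of order $\operatorname{ord}(x_jx_{j+1})$ and inducting, and for a triangle signature $(0;l,m,n)$ invoke the maximal--subgroup list quoted above: a product--one triple of orders $(l,m,n)$ fails to generate $PSL_2(\mathbb{F}_p)$ only if it lies in a dihedral subgroup, the solvable subgroup of order $\tfrac{p(p-1)}{2}$, or one of $A_4,S_4,A_5$. The hypotheses $p\not\equiv\pm1\pmod8$ (no $S_4$), $p\ge13$, and above all $d\ge15$ (so a period $d$ lies in none of $A_4,S_4,A_5$) remove the sporadic obstructions, and a Frobenius character count of solutions of $xyz=1$ with $x,y,z$ in the prescribed classes, set against the number lying in dihedral or Borel subgroups, produces a genuine generating triple.

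\noindent The crux I expect is this last point in the $h=0$ case: showing that for every triangle signature with $\mu(\Gamma_\sigma)\ge\tfrac1{10}$ the product--one solutions are not all absorbed into proper subgroups. The delicate families are triples with all entries in $\{2,3,5\}$ (danger of being trapped in $A_5$, which is present because $p\equiv\pm1\pmod5$) and triples with all entries dividing $\tfrac{p-1}{2}$ or $\tfrac{p+1}{2}$ (danger of a dihedral or Borel subgroup); controlling these requires the congruence and size hypotheses together with an effective Frobenius estimate, as well as bookkeeping of the parity and reality of conjugacy classes peculiar to $p\equiv3\pmod4$. By contrast the Riemann--Hurwitz computation in ($\Rightarrow$), the merging reduction from $r>3$ to $r=3$, and the entire $h\ge1$ case are routine.
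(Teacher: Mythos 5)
Your reformulation of the statement is correct: writing $\mu(\sigma)=2(h-1)+\sum_i\bigl(1-\tfrac1{m_i}\bigr)$, the two displayed inequalities are exactly $\mu\ge\tfrac56-\tfrac1d$ and $\mu\ge\tfrac1{10}$, and since $\tfrac56-\tfrac1d>\tfrac1{10}$ for $d\ge15$ the disjunction collapses to the single condition $\mu\ge\tfrac1{10}$. But your route is not the paper's, and the point where you diverge is where your plan has a genuine gap. The paper proves this lemma in one line by assembling Lemmas \ref{h=0}, \ref{h=1} and \ref{h=2}: it stratifies by the orbit genus $h$, cites {\"O}zaydin--Simmons--Taback \cite{oza} wholesale for $h=0$, handles $h=1$ by explicit Frobenius-type character sums $\sum_\chi|\chi(X)|^2\overline{\chi(g)}/\chi(1)\ne0$ read off its character table together with perfectness of $PSL_2(\mathbb{F}_p)$ (writing $e_{p1}$ as a commutator converts a $(0;2,3,p)$-generating vector into a $(1;2,3)$ one), and handles $h\ge2$ by sending the extra handle generators to known generators and the identity. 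Your $h\ge1$ argument is essentially the paper's. Your $h=0$ argument is where all the content lives, and as written it fails: for \emph{sufficiency} you propose to reduce to $r=3$ by merging $x_j,x_{j+1}$ into one element of order $\operatorname{ord}(x_jx_{j+1})$. That induction runs the wrong way --- a generating vector for the merged, shorter signature does not yield one for $\sigma$, and merging can only shrink the subgroup generated, so it cannot certify generation for the original tuple. What is actually needed is the reverse move: start from a realizable minimal signature and \emph{split} an elliptic generator into two of prescribed orders (the paper's Extension Principle), and the verification that every admissible $\sigma$ with $\mu\ge\tfrac1{10}$ is reachable this way is precisely the combinatorial heart of \cite{oza}; it is not dispatched by ``an effective Frobenius estimate'' in the triangle case alone.

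Two further points. First, your necessity argument only delivers $\mu>0$ (from $g\ge2$ and Riemann--Hurwitz); upgrading to $\mu\ge\tfrac1{10}$ requires actually enumerating the admissible period multisets with $0<\mu<\tfrac1{10}$ --- e.g.\ $(0;2,4,5)$ with $\mu=\tfrac1{20}$, and $(0;2,3,m)$ for $7\le m<15$ --- and eliminating each using the congruence hypotheses (here $p\not\equiv\pm1\pmod 8$ kills order $4$ entirely, and $d\ge15$ forces every admissible period above $5$ to be at least $15$). You gesture at this but do not carry it out, and it is a finite but nontrivial check. Second, the same observation that elements of order $4$ do not exist when $p\not\equiv\pm1\pmod 8$ means that any tuple with $a_4>0$ can never be realized regardless of $\mu$; a careful proof (yours or the paper's) has to confront the fact that the purely numerical criterion does not see this, so the quantifier ``for all admissible exponent tuples'' must be restricted to periods that actually occur as element orders.
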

\noindent The paper is organized as follows. In Section $2,$ we discuss background results. In Section $3,$ and Section $4,$ we will prove our main theorem and will show some applications. 
\section{Preliminary}
\noindent In This section we will talk about the basic definition and well known results, theorems. Mostly we will state the theorems, and results. We will also provide small proof if it is contingent. We begin this section by defining the growth of group.

\noindent Let $G$ be an infinite group, and $S ~ = ~ \lbrace s_1, ~s_2,~, \dots,~s_n\rbrace$ be a generating set of $G$. The length function of the group $G$ with respect to $S$ is defined as $$l_S(g)~ =~ \text {The minimum length of a word in the elements of} ~ S ~ \text{representing}~ G .$$
\noindent The growth function $\gamma ~ : \mathbb{N} ~ \rightarrow ~ \mathbb{N}$ of the group $G$ with respect to the generating set $S$ defined as $$\gamma_S^G(n) ~ = ~ |\lbrace ~ g ~ \in ~ G ~ |l_s(g) ~ \leq ~ n \rbrace |.$$
\noindent Two funtions $\gamma_1 ,~\gamma_2 :\mathbb{N} ~ \rightarrow ~\mathbb{N}$ are said to be equivalent iff $\gamma_1(n) ~ \leq ~ C.\gamma_2(\alpha.n)$ and $\gamma_2(n) ~ \leq ~ C.\gamma_1(\alpha.n)$ for some $C, ~ \alpha ~ \geq 0$. We will write $\gamma_1 ~ \sim ~ \gamma_2$ if $\gamma_1, ~ \text{and}~ \gamma_2$ are equivalent. The following theorem states that the growth of an infinite group is independent of the choice of generator. 
\begin{theorem}
Let $S_1,~S_2$ be two finite generating set of the group $G$. Then the growth functions $\gamma_{S_1}$, and $\gamma_{S_2}$ are equivalent. 
\end{theorem}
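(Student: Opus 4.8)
The plan is to reduce the comparison of the two growth functions to a comparison of the two length functions $l_{S_1}$ and $l_{S_2}$, exploiting the finiteness of both generating sets to obtain uniform bounds. Write $S_1 = \{a_1, \dots, a_m\}$ and $S_2 = \{b_1, \dots, b_k\}$.

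First I would express each generator of one set as a word in the other. Since $S_2$ generates $G$, every $a_i$ admits a representation as a finite word in $S_2 \cup S_2^{-1}$; set $C := \max_{1 \leq i \leq m} l_{S_2}(a_i)$. Because there are only finitely many $a_i$, this maximum is a finite constant. Symmetrically, since $S_1$ generates $G$, set $C' := \max_{1 \leq j \leq k} l_{S_1}(b_j)$, again finite. These two constants encode, in a single bound each, the cost of rewriting between the two alphabets.

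Next I would propagate these bounds from generators to arbitrary elements. Given $g \in G$ with $l_{S_1}(g) = n$, write $g$ as a product of $n$ letters from $S_1 \cup S_1^{-1}$. Replacing each such letter by its $S_2$-word of length at most $C$ yields an expression of $g$ as a product of at most $Cn$ letters from $S_2 \cup S_2^{-1}$, whence $l_{S_2}(g) \leq C \, l_{S_1}(g)$. The symmetric argument gives $l_{S_1}(g) \leq C' \, l_{S_2}(g)$, so the two length functions are bi-Lipschitz comparable.

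Finally I would translate these inequalities into the required equivalence of the growth functions. If $l_{S_1}(g) \leq n$ then $l_{S_2}(g) \leq Cn$, so the ball $\{\, g \in G : l_{S_1}(g) \leq n \,\}$ is contained in the ball $\{\, g \in G : l_{S_2}(g) \leq Cn \,\}$; taking cardinalities gives $\gamma_{S_1}(n) \leq \gamma_{S_2}(Cn)$. The symmetric inclusion gives $\gamma_{S_2}(n) \leq \gamma_{S_1}(C'n)$. Setting $\alpha := \max\{C, C'\}$ and taking the multiplicative constant in the definition of equivalence to be $1$, both inequalities $\gamma_{S_1}(n) \leq \gamma_{S_2}(\alpha n)$ and $\gamma_{S_2}(n) \leq \gamma_{S_1}(\alpha n)$ hold, so $\gamma_{S_1} \sim \gamma_{S_2}$. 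There is no genuine obstacle in this argument; the one point requiring care is that finiteness of the two generating sets is precisely what forces the constants $C$ and $C'$ to be finite, and hence what makes the rewriting cost uniform across all of $G$ rather than growing with the element.
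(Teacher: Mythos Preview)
Your proof is correct and follows essentially the same approach as the paper: express each generator of one set as a word in the other, take the maximum such length as the rewriting constant, and deduce the bi-Lipschitz bound on word lengths and hence on growth functions. The paper's version is considerably terser---it defines the constant $\alpha = \max_i l(e_i)$ and then simply asserts that ``the proof follows immediately''---so your write-up supplies exactly the details the paper omits.
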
   
\begin{proof}
Let $S_1=\lbrace e_1,e_2,\dots,e_l\rbrace$, and $S_2=\lbrace f_1,f_2,\dots,f_s\rbrace$. Then any element of $S_1$ can be represented by interms of elements of $S_2$, $i.e.$ $$e_i=f_{i1}^{\pm j1}.f_{i2}^{\pm j2}.\dots.f_{is}^{\pm js}.$$
\noindent Let $\alpha = \smash {\displaystyle \max_{i=1}^ll(e_i)}$.\\
\noindent Now the proof follows immediately.
\end{proof}
\noindent Now we are going to define the growth of different types of infinte group.
\begin{definition}{(Polynomial growth)}
Let $G$ be a group, and $\gamma$ be the growth function of $G$. Then $G$ is said to have polynomial growth if $\gamma (n)~ \sim n^{\alpha}$ for some $\alpha \geq 0$. 
\end{definition}
\begin{definition}{(Exponential growth)} Let $G$ be a group, and $\gamma$ be the growth function of $G$. Then $G$ is said to have exponential growth if $\gamma (n)~ \sim e^n$ for.
\end{definition}
\begin{definition}{(Superpolynomial growth)}
Let $G$ be a group, and $\gamma$ be the growth function of $G$. Then $G$ is said to have polynomial growth if $\displaystyle \lim_{n \to \infty} \frac{ln(\gamma (n))}{ln ~ n}= \infty$. 
\end{definition}
\begin{definition}{(Subexponential growth)}
Let $G$ be a group, and $\gamma$ be the growth function of $G$. Then $G$ is said to have Subexponential growth if $\displaystyle \lim_{n \to \infty} \frac{ln(\gamma (n))}{n}= 0$. 
\end{definition}
\begin{definition}{(Intermediate growth)}
A group $G$ is said to have intermediate growth if $G$ has both subexponential and superpolynomial growth.
\end{definition}
\noindent The polynomial growth of an infinite group is completely represented by the next theorem.
\begin{theorem}
A finitely generated group $G$ has polynomial growth if and only if $G$ is virtually nilpotent. 
\end{theorem}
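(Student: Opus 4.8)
The plan is to prove the two implications separately, disposing of the elementary direction first and then organizing the deep converse as an induction on the degree of polynomial growth. To show that virtual nilpotency implies polynomial growth, I would first reduce to the nilpotent case: since growth type is an invariant of the commensurability class (a finite-index subgroup has growth function equivalent to that of the whole group, in the spirit of the generating-set independence theorem above), it suffices to treat a finitely generated nilpotent group $N$. For such $N$ the Bass--Guivarc'h formula gives polynomial growth of exact degree
$$ d ~=~ \sum_{i \ge 1} i \cdot \operatorname{rank}\bigl(C_i(N)/C_{i+1}(N)\bigr), $$
where $C_1(N) = N$ and $C_{i+1}(N) = [N, C_i(N)]$ is the lower central series. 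The upper bound $\gamma(n) \preceq n^{d}$ I would obtain by an elementary counting argument: collecting words into a Mal'cev normal form over a polycyclic generating system and bounding the number of admissible exponent tuples. The matching lower bound comes from the free abelian quotients appearing in the successive factors $C_i(N)/C_{i+1}(N)$. This half already appears, in weaker form, in Wolf \cite{wolf}.

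The converse --- that polynomial growth forces virtual nilpotency --- is Gromov's theorem \cite{gro}, and this is where essentially all the difficulty lies. I would argue by induction on the degree $d$ of polynomial growth, the base case $d = 0$ giving a finite group. For the inductive step, the central object is the asymptotic cone: rescaling the word metric, I would form the pointed Gromov--Hausdorff limit of the sequence $(G, \tfrac{1}{n}\, d_S, e)$. The polynomial growth hypothesis supplies uniform doubling of balls, which guarantees that this sequence is precompact and that the limit $X$ is a complete, locally compact, geodesic, homogeneous metric space of finite Hausdorff dimension.

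The heart of the argument is to convert this geometric regularity into algebraic structure. The isometry group $\operatorname{Isom}(X)$ is a locally compact group acting transitively on a finite-dimensional, locally connected, locally compact space, so by the Montgomery--Zippin solution of Hilbert's fifth problem it is a Lie group with finitely many connected components. Tracking the residual $G$-action through the limit, I would extract a nontrivial finite-dimensional representation, and from it --- via the Tits alternative (which rules out free subgroups, since these would force exponential growth) together with the Milnor--Wolf theorem that polynomially growing solvable groups are virtually nilpotent \cite{mil1, wolf} --- the decisive conclusion that some finite-index subgroup $G' \le G$ surjects onto $\mathbb{Z}$. Writing $1 \to K \to G' \to \mathbb{Z} \to 1$, the polynomial growth bound forces $K$ to be finitely generated of growth degree at most $d-1$; by the inductive hypothesis $K$ is virtually nilpotent, so $G'$ is virtually-(nilpotent-by-$\mathbb{Z}$), hence virtually solvable and of polynomial growth, and a final application of Milnor--Wolf upgrades this to virtually nilpotent. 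This property passes back to $G$.

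I expect the main obstacle to be the construction and regularity of the asymptotic cone together with the invocation of Hilbert's fifth problem: showing that the rescaled Cayley graphs converge to a space on which Montgomery--Zippin applies, and then manufacturing a genuinely \emph{nontrivial} homomorphism out of the resulting Lie group, is the crux and cannot be shortcut. The subsidiary claim that the kernel $K$ above is finitely generated --- rather than merely normal --- also uses the polynomial growth bound in an essential way, and I would isolate it as a separate lemma before running the descent.
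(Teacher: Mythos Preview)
Your outline is a faithful and correct sketch of Gromov's original argument in \cite{gro}: the easy direction via Bass--Guivarc'h, then the deep direction by forming an asymptotic cone, invoking Montgomery--Zippin to get a Lie group of isometries, extracting a linear representation, applying the Tits alternative and Milnor--Wolf, and descending by induction on the degree via a surjection to $\mathbb{Z}$. The obstacles you flag --- regularity of the limit space, nontriviality of the induced representation, and finite generation of the kernel $K$ --- are exactly the delicate points.

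The paper, however, does not prove this theorem at all: its entire proof consists of the citation \cite{gro}. So there is no ``paper's approach'' to compare against beyond deferring to Gromov. Your proposal is therefore not merely a different route but a genuine proof sketch where the paper offers none; what you have written is essentially what \cite{gro} does, so in that sense you and the paper agree on the source, but you have supplied the content while the paper has only supplied the pointer.
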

\begin{proof}
\cite{gro}.
\end{proof}

\section{Word Growth of Finite Group}
\noindent Previously we studied the growth of infinite groups with bounded set of generators. In this section we will study the analogous case, where we consider a infinite family of finite groups with uniformly bounded sized finite generating set.
\begin{definition}
Let $\mathcal{G} ~ =~ (G_i)_{i ~ \in ~ I}$ be a family of $k-$ generated finite groups. Let $\mathcal{S}~=~(S_i)_{i~\in ~I}$  be a family of respective generating set for $\mathcal{G} ~ =~ (G_i)_{i ~ \in ~ I}$ with $|S_i|~ \leq ~ k$ for all $i ~ \in ~ I$. Abusing the notations, we shall henceforth call such generating families a bounded sized generating families. The growth function of $\mathcal{G}$ with respect to the generating set $\mathcal{S}$ is defined as  $\gamma_{\mathcal{S}}^{\mathcal{G}} ~ = ~ \displaystyle \max_{i ~ \in ~ I}(\gamma_{S_i}^{G_i})$ for all $n ~ \in ~ \mathbb{N}$.
\end{definition} 
\noindent The above definition makes sense , as $\mathcal{S}$ is a bounded sized generating family, so for every $n,~ \displaystyle \max_{i~ \in ~ I}(\gamma_{S_i}^{G_i}(n))$ is attained. In this finite case, it can be seen that $\displaystyle \lim_{n ~ \to ~ \infty}\gamma_{\mathcal{S}}^{\mathcal{G}} (n)^{\frac{1}{n}}$ exists, and it is finite. So the analogy of polynomial growth, exponential growth, super polynomial growth, and sub exponential growth are also make sense here. The corresponding definitions are follows.

\begin{definition}{(Totally exponential family)}
A family of groups $\mathcal{G}$ is said to be totally exponential family if $\mathcal{G}$ grows exponentially with respect to every bounded sized generating family. In similar fashion we can define the totally polynomial family or intermediate family of groups.
\end{definition}
\noindent There is a one to one group theoretic correspondence between residually finite groups and their finite quotients. In \cite{mann1, mann3, black} the connections was well portraied.
In point of fact, we have a finitely generated residually finite group $G$ and finite generating set $S$. For a given family of groups $\mathcal{G}=G_i$, consisting of all finite quotients of $G$, with respective generating sets $\mathcal{S}=S_i$. Here $S_i$ is the epimorphic image of $S$ under the mapping $G ~ \rightarrow ~ G_i$. Then it is an obvious observation that $\gamma_S^G$ and $\gamma_{\mathcal{S}}^{\mathcal{G}}$ act as the same funtion \ref{connection}. The following theorem characterize the polynomial growth for a family of finite groups.
\begin{theorem}
Let $(\mathcal{G},~ \mathcal{S})$ be a $k$ generator family of finite groups. Then the following statements are equivalent:
\begin{enumerate}
\item[1.] There exists constants $c,~l ~>0$ such that every $G_i ~ \in  ~ \mathcal{G}$ has a nilpotent normal subgroup of nilpotency class at most $c$ and index in $G_i$ at most l.
\item[2.] There exists a generating family $\mathcal{S}$ for $\mathcal{G}$ such that $(\mathcal{G},~ \mathcal{S})$ grows polynomially.
\item[3.] $(\mathcal{G},~ \mathcal{S})$ grows polynomially for all bounded sized generating families $\mathcal{S}$ of $\mathcal{G}$.
\end{enumerate}
\end{theorem}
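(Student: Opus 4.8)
The plan is to establish the cycle of implications $(1)\Rightarrow(3)\Rightarrow(2)\Rightarrow(1)$, of which the first two are soft and the last carries the real content, being an effective, family-uniform version of Gromov's theorem (the polynomial growth theorem stated above). The implication $(3)\Rightarrow(2)$ is immediate: by hypothesis $\mathcal{G}$ admits at least one bounded sized generating family, and $(3)$ asserts polynomial growth for \emph{every} such family, so in particular one exists.

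For $(1)\Rightarrow(3)$, I would fix an arbitrary bounded sized generating family $\mathcal{S}=(S_i)$ with $|S_i|\le k$ and combine two uniform estimates. First, a $k$-generated nilpotent group of class at most $c$ has polynomial growth of degree at most $D=D(c,k)$, with $D$ independent of the group: this follows from the Bass--Guivarc'h degree formula $D=\sum_{j\ge 1} j\cdot \mathrm{rk}(\gamma_j/\gamma_{j+1})$ together with the fact that the number of basic commutators of weight $j$ in $k$ letters is bounded, so each rank is bounded and the sum terminates at $j=c$. Second, growth is preserved, up to the equivalence $\sim$ defined earlier, under passage between a group and a subgroup of index at most $l$: Reidemeister--Schreier supplies a generating set of the subgroup of size at most $lk$, and a standard comparison shows $\gamma_{G_i}$ and $\gamma_N$ differ only by multiplicative and dilation constants depending on $l,k$. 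Applying both to the nilpotent normal subgroup $N\trianglelefteq G_i$ of class $\le c$ and index $\le l$ furnished by $(1)$ yields $\gamma_{S_i}^{G_i}(n)\le C n^{D}$ with $C,D$ depending only on $c,l,k$, uniformly in $i$, which is $(3)$.

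The substance is $(2)\Rightarrow(1)$. Suppose $(\mathcal{G},\mathcal{S})$ grows polynomially, so $\gamma_{S_i}^{G_i}(n)\le Cn^{D}$ uniformly in $i$ and for all $n$. I would argue by contradiction through compactness: were no uniform $c,l$ to exist, one could select indices along which the minimal class-plus-index blows up and form the ultraproduct $G=\prod_i G_i/\mathcal{U}$ over a nonprincipal ultrafilter $\mathcal{U}$. The limits of the generators mark $G$, and the uniform bound passes to the marked limit group, whose growth is therefore polynomial of degree $\le D$; by Gromov's theorem it is virtually nilpotent, possessing a nilpotent normal subgroup of some class $c_0$ and index $l_0$. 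Since, for fixed $c_0,l_0$, the existence of a nilpotent normal subgroup of class $\le c_0$ and index $\le l_0$ is a property preserved under ultraproducts (expressible by an elementary axiom scheme via the finitely many cosets), {\L}o\'s's theorem forces it to hold in $G_i$ for a $\mathcal{U}$-large set of $i$, contradicting the blow-up. Absorbing the excluded indices into the constants produces the uniform $c,l$ of $(1)$.

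The hardest step will be this last descent, and it is precisely the content of the quantitative Gromov theorem in its sharp finite form (due to Breuillard--Green--Tao). The delicate point is that a general element of the ultraproduct need not be a bounded-length word in the limit generators, so the marked limit group is in general a \emph{proper} subgroup of $G$; to control $G$ itself rather than only this finitely generated piece, one must exploit that the polynomial bound holds at all scales up to the diameter of each finite $G_i$, whence $|G_i|$ is polynomially bounded by its diameter. Converting the qualitative virtual nilpotency of the limit into genuinely uniform bounds $c,l$—as opposed to the bare "some structure exists" that a naive application of {\L}o\'s would give—is exactly where this scale-uniformity is indispensable, and it is here that I would invoke the effective results recorded in \cite{mann1, mann3, black} should a self-contained argument prove too lengthy.
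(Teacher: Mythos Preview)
The paper does not prove this theorem at all: its entire ``proof'' is the single citation \cite{black}. Your proposal therefore goes far beyond what the paper provides, supplying an actual argument where the paper merely points to Black's original.

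As to the content of your sketch: the implications $(1)\Rightarrow(3)$ and $(3)\Rightarrow(2)$ are handled correctly, and your identification of $(2)\Rightarrow(1)$ as the substantive step is right. The ultraproduct/compactness strategy you outline is a legitimate route, and you are honest about its delicate point. I would flag one place where the argument as written is loose: the property ``has a nilpotent normal subgroup of class $\le c_0$ and index $\le l_0$'' that you want to transfer via {\L}o\'s is a property you have established only for the \emph{marked limit subgroup} of the ultraproduct, not for the ultraproduct $G=\prod G_i/\mathcal{U}$ itself; {\L}o\'s applies to first-order properties of $G$, so the descent does not go through as stated. The fix is to work instead in the space of $k$-marked groups (which is compact) and argue that any limit point of the $(G_i,S_i)$ has polynomial growth, hence is virtually nilpotent by Gromov, and then use that ``has a nilpotent normal subgroup of class $\le c_0$ and index $\le l_0$'' is an \emph{open} condition in the marked topology to push it back to cofinitely many $G_i$. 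This is essentially Black's argument, and since you already anticipate invoking \cite{black} at precisely this point, the proposal is sound in spirit.
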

\begin{proof}
\cite{black}.
\end{proof}
\noindent The next two theorems ensure the existence of a family of finite groups with intermediate grwoth, but there doesn't exists totally intermediate family of finite groups.
\begin{theorem}
There exists a family of finite group having intermediate growth with respect to some generating family.
\end{theorem}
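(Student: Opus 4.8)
\emph{Sketch of proof.} The plan is to realize the word growth function of the Grigorchuk group as the growth function of a suitable family of its finite quotients. Recall (see the Introduction and \cite{grigo}) that there is a group $\mathbf{G}$ acting faithfully on the infinite rooted binary tree $T$, finitely generated by a fixed set $S$, which is residually finite and whose word growth $\gamma_S^{\mathbf{G}}$ is of intermediate type, i.e.\ $\lim_{n\to\infty}\ln\gamma_S^{\mathbf{G}}(n)/\ln n=\infty$ and $\lim_{n\to\infty}\ln\gamma_S^{\mathbf{G}}(n)/n=0$. Concretely, for $m\geq 1$ let $\mathrm{St}(m)\trianglelefteq\mathbf{G}$ be the (finite index) pointwise stabilizer of the $m$-th level of $T$, set $G_m:=\mathbf{G}/\mathrm{St}(m)$, and let $\mathcal{S}_m$ be the image of $S$ in $G_m$; faithfulness of the action is exactly the statement $\bigcap_{m\geq 1}\mathrm{St}(m)=\{1\}$, so $\mathbf{G}$ is residually finite and the orders $|G_m|$ tend to infinity. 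Then $\mathcal{G}:=(G_m)_{m\geq 1}$ is a family of finite groups and $\mathcal{S}:=(\mathcal{S}_m)_{m\geq 1}$ is a bounded sized generating family, because $|\mathcal{S}_m|\leq|S|$ for all $m$; I claim that $(\mathcal{G},\mathcal{S})$ has intermediate growth.

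The key step is the identity $\gamma_{\mathcal{S}}^{\mathcal{G}}=\gamma_S^{\mathbf{G}}$ of functions on $\mathbb{N}$ --- this is the correspondence between the word growth of a residually finite group and that of its family of finite quotients recalled in the Introduction. For ``$\leq$'': every word of length $\leq n$ in $\mathcal{S}_m$ lifts to a word of length $\leq n$ in $S$, so the quotient map $\mathbf{G}\twoheadrightarrow G_m$ carries the $n$-ball of $\mathbf{G}$ onto the $n$-ball of $G_m$, giving $\gamma_{\mathcal{S}_m}^{G_m}(n)\leq\gamma_S^{\mathbf{G}}(n)$ for every $m$ and $n$. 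For ``$\geq$'': fix $n$; the $n$-ball $B_n$ of $\mathbf{G}$ is finite, hence so is the set $D_n:=\{x^{-1}y:x,y\in B_n,\ x\neq y\}$ of nonidentity elements, and by residual finiteness there is $m_0=m_0(n)$ with $\mathrm{St}(m_0)\cap D_n=\emptyset$; for that $m_0$ the map $B_n\to G_{m_0}$ is injective, so $\gamma_{\mathcal{S}_{m_0}}^{G_{m_0}}(n)\geq|B_n|=\gamma_S^{\mathbf{G}}(n)$. Taking the maximum over $m$ gives $\gamma_{\mathcal{S}}^{\mathcal{G}}(n)=\gamma_S^{\mathbf{G}}(n)$ for all $n$. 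Since $\gamma_{\mathcal{S}}^{\mathcal{G}}$ is then literally the same function as $\gamma_S^{\mathbf{G}}$, it is at once superpolynomial and subexponential; hence $(\mathcal{G},\mathcal{S})$ is an intermediate family, which proves the theorem.

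I do not expect a genuine obstacle here: the one substantive ingredient is Grigorchuk's theorem that $\mathbf{G}$ has intermediate growth, which is invoked from \cite{grigo} as a black box. The only points needing a little attention are (i) that one must work with a \emph{cofinal} chain of finite quotients (the level quotients, or equivalently the whole family of finite quotients), since any single finite quotient reproduces $\gamma_S^{\mathbf{G}}$ only on a bounded initial segment, so it is the maximum over the entire family that recovers it on all of $\mathbb{N}$; and (ii) that both defining properties of intermediate growth must pass to $\mathcal{G}$, which is immediate here because the equality $\gamma_{\mathcal{S}}^{\mathcal{G}}=\gamma_S^{\mathbf{G}}$ is exact rather than merely an equivalence $\sim$, so no bookkeeping with the constants $C,\alpha$ is required.
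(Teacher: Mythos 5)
Your proof is correct, and it is essentially the argument behind the result the paper invokes: the paper itself gives no proof here beyond the citation to Black, and Black's construction is exactly yours --- take the Grigorchuk group $\mathbf{G}$ with its level-stabilizer quotients $G_m=\mathbf{G}/\mathrm{St}(m)$ and transfer the intermediate growth of $\gamma_S^{\mathbf{G}}$ to the family via the identity $\gamma_{\mathcal{S}}^{\mathcal{G}}=\gamma_S^{\mathbf{G}}$. Your two inequalities (quotient maps are $1$-Lipschitz, and the descending chain $\mathrm{St}(m)$ with trivial intersection eventually separates the finite difference set of any ball) are exactly the content of the ``obvious observation'' about residually finite groups stated in the paper's introduction, and your remark that the equality of functions is exact, so both limit conditions defining intermediate growth transfer without tracking constants, is the right closing step.
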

\begin{proof}
\cite{black}.
\end{proof}
\begin{theorem}
Let $\mathcal{G}$ be a family of groups which grows subexponentially for all bounded sized generating set $\mathcal{S}$, then $\mathcal{G}$ is nil-$c$-by-index-l.
\end{theorem}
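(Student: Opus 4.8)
The plan is to prove the contrapositive. Assume $\mathcal{G}=(G_i)_{i\in I}$ is a $k$-generated family that is \emph{not} nil-$c$-by-index-$l$ for any constants $c,l$; I will exhibit a single bounded sized generating family $\mathcal{S}=(S_i)$ with respect to which $\mathcal{G}$ grows exponentially, which contradicts the hypothesis that $\mathcal{G}$ grows subexponentially with respect to \emph{every} bounded sized generating family. The reduction to this goal rests on the dichotomy noted earlier: since $\lambda_{\mathcal{S}}:=\lim_{n\to\infty}\gamma_{\mathcal{S}}^{\mathcal{G}}(n)^{1/n}$ exists and lies in $[1,\infty)$, and since $\gamma_{\mathcal{S}}^{\mathcal{G}}(n)\le (2k)^{n}$ always, subexponential growth is exactly $\lambda_{\mathcal{S}}=1$ and exponential growth is exactly $\lambda_{\mathcal{S}}>1$, with no third value possible. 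Thus a single $\mathcal{S}$ with $\lambda_{\mathcal{S}}>1$ suffices. This also frames the statement as the assertion that no \emph{totally intermediate} family exists: by the polynomial-growth characterization above, failure of nil-$c$-by-index-$l$ already rules out polynomial growth for every $\mathcal{S}$, so the only remaining alternative to exclude is intermediate-growth-for-all, which the construction below eliminates.

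The key lemma to establish is the following: if $\mathcal{G}$ is not nil-$c$-by-index-$l$, then there is a bounded sized $\mathcal{S}=(S_i)$, a constant $\mu>1$, and for each member a ``freeness length'' $\ell_i$ with $\ell_i\to\infty$ along the family, such that $\gamma_{S_i}^{G_i}(n)\ge \mu^{n}$ whenever $n\le \ell_i$. Granting this, the conclusion is immediate from the formula $\gamma_{\mathcal{S}}^{\mathcal{G}}(n)=\max_{i}\gamma_{S_i}^{G_i}(n)$: for any radius $n$ one selects a member with $\ell_i\ge n$, whence the radius-$n$ ball already contains at least $\mu^{n}$ elements and $\gamma_{\mathcal{S}}^{\mathcal{G}}(n)\ge\mu^{n}$, so $\lambda_{\mathcal{S}}\ge\mu>1$. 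Members for which no good generators are available may be assigned arbitrary bounded generating sets, since they only enlarge the maximum; the construction therefore only needs one good member per radius.

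To produce the required free behavior I would invoke the structure theory of finite groups. Failure of nil-$c$-by-index-$l$ means that for every $N$ some $G_i$ has no nilpotent normal subgroup of class at most $N$ and index at most $N$; passing through the generalized Fitting subgroup and the composition factors, this forces, cofinally in the family, sections of unbounded complexity of one of three types: non-abelian composition factors of unbounded order, a Fitting subgroup of unbounded nilpotency class, or solvable sections of unbounded derived length. In each regime one selects two elements generating a free subsemigroup of rank two \emph{up to length proportional to the complexity parameter}, so that the $2^{n}$ positive words of length $n$ remain distinct for $n\le\ell_i$: a ping-pong / independence argument handles the simple and large-derived-length cases, while in the large-class case one uses that the defining relations among suitable generators of a nilpotent section of class $c$ first appear only at word length growing with $c$ (via the free Lie algebra / Magnus description of the lower central series), so that its radius-$n$ ball injects from that of the free group of rank two, which grows exponentially. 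Collecting these generators gives $\mathcal{S}$ with $\mu=2$.

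The main obstacle is precisely this last lemma: deducing a rank-two free subsemigroup whose freeness length tends to infinity from the bare hypothesis that nil-$c$-by-index-$l$ fails, uniformly across the three structural regimes and after controlling cancellation that could collapse the naive $2^{n}$ count into fewer distinct group elements. The non-abelian simple case relies on word-growth/diameter estimates for finite simple groups, the nilpotent case on the Bass--Guivarc'h description of the lower central series and on locating a $2$-generated section of class growing with that of the ambient group, and the solvable case on extracting a metabelian or affine section. Assembling these local constructions into a single bounded sized generating family for all of $\mathcal{G}$ is where the real care is required; once this is in place, the contrapositive, and hence the theorem, follows at once.
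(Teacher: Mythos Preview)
The paper does not actually prove this theorem: its entire proof is the single citation ``\cite{black}'', i.e.\ it defers to Black's original paper. There is therefore no argument in the present paper to compare your proposal against.

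That said, a brief assessment of your sketch on its own merits: the overall architecture---contrapositive, reduce to producing a bounded generating family with a uniform exponential lower bound on balls, and obtain this from a trichotomy coming out of the generalized Fitting subgroup---is indeed the shape of Black's argument. But your proposal is, by your own admission, incomplete at exactly the crucial point: the ``key lemma'' that extracts two generators forming a free subsemigroup up to length $\ell_i\to\infty$ is asserted rather than proved, and the justifications you offer in each of the three regimes (ping-pong for simple sections, lower-central-series/Magnus bounds for nilpotent sections of large class, ``extracting a metabelian or affine section'' for large derived length) are each substantial results that require real work and careful quantitative control. In particular, the passage from ``some section has the desired free behavior'' to ``two elements of $G_i$ itself generate with exponential ball growth up to length $\ell_i$'' needs a lifting argument you have not supplied. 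So as written this is a plausible outline of Black's strategy, not a proof; the paper itself sidesteps all of this by citing the source.
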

\begin{proof}
\cite{black}.
\end{proof}
\begin{corollary} We can conclude that if $\mathcal{G}$ grows subexponentially, then either $\mathcal{G}$ is a totally polynomial family or there exists a generating family $\mathcal{T}$ such that $(\mathcal{G},~ \mathcal{T})$ grows exponentially. 
\end{corollary}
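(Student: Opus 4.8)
The plan is to prove the statement by a single case distinction, namely whether $\mathcal{G}$ grows subexponentially with respect to \emph{every} bounded sized generating family, and then to feed each case directly into the two theorems recalled just above (the characterization of polynomial growth and the theorem that subexponential growth for all generating sets forces $\mathcal{G}$ to be nil-$c$-by-index-$l$). I read the hypothesis "$\mathcal{G}$ grows subexponentially" as the assumption that there is at least one bounded sized generating family with respect to which $(\mathcal{G},\mathcal{S})$ is subexponential; as will be clear, the case split actually drives the whole argument, so the conclusion is insensitive to this starting point. The engine that keeps the dichotomy clean is the fact already noted in this section: for every bounded sized generating family $\mathcal{S}$ the limit $\lambda(\mathcal{S}):=\lim_{n\to\infty}\gamma_{\mathcal{S}}^{\mathcal{G}}(n)^{1/n}$ exists and is finite.

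First I would record a per-generating-family collapse. Since $\gamma_{\mathcal{S}}^{\mathcal{G}}(n)\ge 1$ for all $n$, we have $\lambda(\mathcal{S})\ge 1$, and because the limit exists, $\lim_{n\to\infty}\frac{\ln \gamma_{\mathcal{S}}^{\mathcal{G}}(n)}{n}=\ln\lambda(\mathcal{S})$. Hence exactly one of two things occurs for a fixed $\mathcal{S}$: either $\lambda(\mathcal{S})=1$, in which case $\lim_{n\to\infty}\frac{\ln\gamma_{\mathcal{S}}^{\mathcal{G}}(n)}{n}=0$ and $(\mathcal{G},\mathcal{S})$ grows subexponentially, or $\lambda(\mathcal{S})>1$, in which case $\gamma_{\mathcal{S}}^{\mathcal{G}}\sim e^{n}$ and $(\mathcal{G},\mathcal{S})$ grows exponentially. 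Thus, for a fixed generating family, "not subexponential" is synonymous with "exponential"; there is no intermediate possibility at this coarse level, even though within the subexponential regime both polynomial and intermediate behaviour remain available.

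Next I would carry out the two cases. In the first case, suppose $(\mathcal{G},\mathcal{S})$ grows subexponentially for \emph{all} bounded sized generating families $\mathcal{S}$. Then the theorem that subexponential growth for all generating sets forces $\mathcal{G}$ to be nil-$c$-by-index-$l$ applies, and the implication from nil-$c$-by-index-$l$ to polynomial growth for all bounded sized generating families (item $1\Rightarrow 3$ of the characterization theorem) shows that $\mathcal{G}$ is a totally polynomial family; this is the first alternative. In the second case there is at least one bounded sized generating family $\mathcal{T}$ with $(\mathcal{G},\mathcal{T})$ not subexponential. By the collapse recorded above we get $\lambda(\mathcal{T})>1$, so $(\mathcal{G},\mathcal{T})$ grows exponentially, which is the second alternative. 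The two cases are exhaustive, so the dichotomy follows.

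The argument is short given the cited theorems, and the step I would guard most carefully is the interplay between the two cases rather than any single computation: one must be sure that a family cannot remain subexponential-but-not-totally-polynomial across \emph{all} generating families, and this is exactly what the chain "subexponential for all $\Rightarrow$ nil-$c$-by-index-$l \Rightarrow$ totally polynomial" rules out. A useful consistency check, which I would include as a closing remark, is that an intermediate family (which exists by the previous theorem) can therefore never be subexponential with respect to every generating family; it must realize exponential growth for some $\mathcal{T}$ and so lands in the second alternative. This is precisely the feature that distinguishes the finite-family setting, where growth depends on the generating set, from the infinite-group setting, where the growth type is generating-set independent.
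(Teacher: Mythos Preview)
Your argument is correct and is exactly the route the paper intends: the corollary is stated without proof as an immediate consequence of the two preceding results (the nil-$c$-by-index-$l$ theorem and the polynomial-growth characterization), and you have supplied precisely those details, together with the observation that the existence of $\lim_n \gamma_{\mathcal{S}}^{\mathcal{G}}(n)^{1/n}$ collapses ``not subexponential'' to ``exponential'' for any fixed generating family. Your remark that the hypothesis on $\mathcal{G}$ is essentially idle, since the case split already exhausts all possibilities, is also well taken.
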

\section{Fuchsian group}
\noindent A discrete subgroup of $PSL_2(\mathbb{R})$ is known as \emph{Fuchsian group} \cite{sve}. The following theorem narrates a presentation for Fuchsian group.
\begin{theorem}
{\cite{tb}} If $ \Gamma $ is a Fuchsian group with compact orbit space $ \mathcal{U}/ \Gamma $ of genus $h$ then there are elements $ \alpha_{1},\beta_{1}, \dots ,\alpha_{h},\beta_{h},c_{1},\dots,c_{r} $ in $ Aut(\mathcal{U}) $ such that the following holds,

\begin{enumerate}
\item We have $ \Gamma = \langle \alpha_{1},\beta_{1}, \dots ,\alpha_{h},\beta_{h},c_{1},\dots,c_{r} \rangle. $
\item Defining relations for $ \Gamma $ are given by $$ c_{1}^{m_{1}},\dots,c_{r}^{m_{r}},\prod_{i=1}^{h}[\alpha_{i},\beta_{i}] \prod_{j=1}^{r}c_{j} $$ where the $ m_{i} $ are integers with $ 2\leq m_{1}\leq \dots \leq m_{r}. $
\item  Each non identity element of finite order in $ \Gamma $ lies in a unique conjugate of $ \langle c_{i} \rangle $ for suitable $i$ and the cyclic subgroups $\langle c_i\rangle$ are maximal in $ \Gamma. $ 
\item Each non identity element of finite order in $ \Gamma $ has a unique fixed point in $ \mathcal{U}. $ Each element of infinite order in $ \Gamma $ acts fixed point freely on $ \mathcal{U} .$ Finite order elements are called elliptic elements and infinite order element are called hyperbolic elements.  
\end{enumerate}
\end{theorem}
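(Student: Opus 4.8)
The plan is to realize $\Gamma$ as the group generated by the side-pairing transformations of a suitable fundamental polygon, and then to normalize that polygon to its standard shape, invoking Poincaré's polygon theorem to read off the presentation. First I would fix a point $p \in \mathcal{U}$ whose $\Gamma$-stabilizer is trivial (such $p$ exists because the points with nontrivial stabilizer form a discrete, hence negligible, set) and form the Dirichlet region $D = \{z \in \mathcal{U} : \rho(z,p) \le \rho(z, gp) \text{ for all } g \in \Gamma\}$, where $\rho$ is the hyperbolic metric. Since the action is properly discontinuous and $\mathcal{U}/\Gamma$ is compact, $D$ is a compact, convex hyperbolic polygon with finitely many geodesic sides, and its $\Gamma$-translates tile $\mathcal{U}$ in a locally finite way. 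A standard argument then shows that the side-pairing isometries of $D$ generate $\Gamma$.

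Next I would apply Poincaré's polygon theorem to $D$: it asserts that $\Gamma$ is presented by the side-pairing generators subject to the relations arising from the vertex (elliptic) cycles, each such cycle contributing a relation of the form $w^{m} = 1$, where $m$ is the cycle's order and $w$ the corresponding product of side-pairings. This already produces the elliptic elements of finite order together with their orders $m_1, \dots, m_r$. The remaining task is combinatorial: to transform the side-pairing pattern of $D$, by a finite sequence of cut-and-paste moves along geodesic arcs (the hyperbolic analogue of the classification of compact surfaces), into the canonical normal form whose boundary word is $\prod_{i=1}^{h} [\alpha_i, \beta_i] \prod_{j=1}^{r} c_j$, with the $c_j$ the boundary loops encircling the cone points and the $\alpha_i, \beta_i$ the handle generators. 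Tracking the vertex cycles through these moves shows the elliptic relations become exactly $c_j^{m_j} = 1$, and the relation coming from collapsing all vertices into a single cycle becomes the long relation, yielding items (1) and (2).

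For items (3) and (4) I would argue geometrically inside $\mathrm{Isom}^{+}(\mathcal{U}) = PSL_2(\mathbb{R})$. A nontrivial orientation-preserving isometry of $\mathcal{U}$ fixes a point of $\mathcal{U}$ if and only if it is elliptic (equivalently $|\mathrm{tr}| < 2$), in which case that fixed point is unique; hyperbolic and parabolic elements fix only boundary points and hence act freely on $\mathcal{U}$. Since $\Gamma$ is discrete, an elliptic element must have finite order, because its rotation angle is forced to be a rational multiple of $2\pi$ (otherwise its powers would accumulate, contradicting discreteness), and conversely every finite-order element is elliptic; this establishes (4). For (3), any finite-order $g \ne 1$ fixes a unique point $x$ whose $\Gamma$-orbit descends to one of the cone points, so the stabilizer $\Gamma_x$ is a finite subgroup of the rotation group about $x$, hence cyclic, and is conjugate to some $\langle c_i \rangle$; maximality follows because any strictly larger finite subgroup would raise the cone-point order, while maximality among cyclic subgroups uses that each elliptic element has a single fixed point.

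The main obstacle I expect is the normalization step: reducing an arbitrary side-pairing polygon to canonical form requires care to ensure that the cut-and-paste moves respect the elliptic cycles and preserve the abstract isomorphism type of $\Gamma$, and to verify that the resulting boundary word is exactly the stated one, with the number of handles matching the genus $h$ of $\mathcal{U}/\Gamma$. This is where the bulk of the topological bookkeeping resides; by contrast, the fixed-point analysis underlying (3) and (4) is essentially a direct computation in $PSL_2(\mathbb{R})$.
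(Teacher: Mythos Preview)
Your outline is sound and follows the standard route (Dirichlet domain, Poincar\'e's polygon theorem, combinatorial normalization, classification of isometries in $PSL_2(\mathbb{R})$). However, note that the paper does not actually prove this theorem: it is quoted with attribution to \cite{tb} and no proof is supplied. The paper does, a bit further down in the same section, include an informal description of how a Dirichlet region and its vertex cycles induce a presentation $\langle \Sigma \mid (c_1)^{n_1},\dots,(c_m)^{n_m}\rangle$; that sketch is essentially the first half of your plan (your steps on the Dirichlet region and Poincar\'e's theorem), but it stops short of the cut-and-paste normalization to the canonical word $\prod [\alpha_i,\beta_i]\prod c_j$ and says nothing about items (3) and (4). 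So what you have written goes well beyond what the paper itself argues, and your identification of the normalization step as the nontrivial bookkeeping is accurate.
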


\noindent For a Fuchsian group $ \Gamma $ as above, the numbers $ h,r $ and $ m_{1},m_{2},\dots,m_{r} $ are uniquely determined; $ 2h $ is the rank of free abelian part of the commutator factor of $ \Gamma. $ \\
We call $\sigma:= (h;m_{1},m_{2},\dots,m_{r}) $ the signature of $ \Gamma, $ the integer $ h $ is called the orbit genus and $ m_{i} $ are called the periods of $ \Gamma $ for $i=1,2,\dots,r. $ We define $$\Gamma(\sigma)=\langle \alpha_{1},\beta_{1}, \dots ,\alpha_{h},\beta_{h},c_{1},\dots,c_{r}\\ |
c_{1}^{m_{1}}=\dots,=c_{r}^{m_{r}}=\prod_{i=1}^{h}[\alpha_{i},\beta_{i}] \prod_{j=1}^{r}c_{j}=1\rangle $$ 

\noindent Let $G$ be a group, and $g  \in  G$ be a non identity element. If there exists a finite homomorphic image $G^*$ of $G$ such that the image $g^*$ of $g$ is not identity, then $G$ is said to be residually finite. In particular Fuchsian groups are residually finite \cite{res}. This notion will allow us to compute the growth of the family of finite groups $\lbrace ~PSL_2(\mathbb{F}_p)~\rbrace$ where $p ~ \equiv ~3~(mod ~ 4)$.

\noindent Now we define the signature of a finite group in the sense of Harvey \cite{har}.

\begin{lemma}[Harvey condition]
\label{Harvey condition}
A finite group $G$ acts faithfully on $\Sigma_g$ with signature $\sigma:=(h;m_1,\dots,m_r)$ if and only if it satisfies the following two conditions: 

\begin{enumerate}

\item The \emph{Riemann-Hurwitz formula for orbit space} i.e. $$\displaystyle \frac{2g-2}{|G|}=2h-2+\sum_{i=1}^{r}\left(1-\frac{1}{m_i}\right), \text{ and }$$

\item  There exists a surjective homomorphism $\phi_G:\Gamma(\sigma) \to G$ that preserves the orders of all torsion elements of $\Gamma$. The map $\phi_G$ is also known as surface-kernel epimorphisom.
\end{enumerate}
\end{lemma}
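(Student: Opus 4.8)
\section*{Proof proposal}

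The plan is to prove the equivalence through the uniformization theorem, which translates a faithful conformal action on $\Sigma_g$ into a normal inclusion of Fuchsian groups, and conversely. Throughout I would work in the hyperbolic regime $g \geq 2$, where $\Sigma_g = \mathcal{U}/K$ for a torsion-free Fuchsian (surface) group $K \cong \pi_1(\Sigma_g)$; the degenerate cases $g=0,1$ are handled by the analogous spherical and Euclidean uniformizations and are irrelevant to the application to $PSL_2(\mathbb{F}_p)$, whose signatures are always hyperbolic. The two conditions then correspond to the two pieces of data carried by such an inclusion: condition (1) records the index via comparison of hyperbolic areas, and condition (2) records the quotient group together with its ramification.

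For the forward direction I would start from a faithful orientation-preserving action of $G$ on $\Sigma_g = \mathcal{U}/K$, taken to be conformal by the positive solution of the Nielsen realization problem cited in the introduction. Each element of $G$ then lifts to $\mathrm{Aut}(\mathcal{U}) = PSL_2(\mathbb{R})$, and the set of all lifts forms a Fuchsian group $\Gamma$ containing $K$ as a normal subgroup with $\Gamma/K \cong G$. By the presentation theorem for Fuchsian groups stated above, $\Gamma$ carries a signature $(h;m_1,\dots,m_r)$, where $h$ is the genus of the orbit space $\mathcal{U}/\Gamma = \Sigma_g/G$ and the $m_i$ are the orders of the maximal cyclic stabilizers of the branch points; this is exactly $\sigma$. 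Identifying $\Gamma$ with $\Gamma(\sigma)$, the quotient map $\phi_G:\Gamma(\sigma)\to\Gamma/K\cong G$ is the required epimorphism, and it preserves torsion orders because $K$ is torsion-free, so no proper power of any $c_i$ (of order $m_i$) can lie in $K$. Comparing hyperbolic areas across the index-$|G|$ inclusion $K\leq\Gamma(\sigma)$ then gives $2g-2=|G|\left(2h-2+\sum_{i=1}^{r}(1-1/m_i)\right)$, which is condition (1).

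For the converse, suppose (1) and (2) hold and set $K:=\ker\phi_G$, a normal subgroup of $\Gamma(\sigma)$ of index $|G|$. The first key step is that $K$ is torsion-free: by part (3) of the presentation theorem every nontrivial finite-order element of $\Gamma(\sigma)$ is conjugate to a power $c_i^{k}$ with $0<k<m_i$, and if such an element lay in $K$ then $\phi_G(c_i)^{k}=1$ with $\phi_G(c_i)$ of order exactly $m_i$ would force $m_i\mid k$, a contradiction. Hence $K$ is a torsion-free Fuchsian group, so $\Sigma:=\mathcal{U}/K$ is a closed Riemann surface on which $G\cong\Gamma(\sigma)/K$ acts by the induced conformal automorphisms; the action is faithful because any element of $\Gamma(\sigma)$ inducing the identity on $\Sigma$ must cover the identity and hence be a deck transformation, i.e. lie in $K$. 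The branched covering $\Sigma\to\mathcal{U}/\Gamma(\sigma)=\Sigma_h$ has ramification indices equal to the local stabilizer orders $|\langle\phi_G(c_i)\rangle|=m_i$, so the induced action has signature exactly $\sigma$; and Riemann-Hurwitz gives $\frac{2g'-2}{|G|}=2h-2+\sum(1-1/m_i)$ for the genus $g'$ of $\Sigma$, whence (1) forces $g'=g$.

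The main obstacle, and the point deserving the most care, is the reverse direction's passage from the purely group-theoretic datum $\phi_G$ to an honest conformal surface action. This rests on the uniformization theorem, to realize the torsion-free $K$ as a surface group, together with the torsion-freeness of $K$ itself; the latter is precisely what makes the order-preserving clause in (2) indispensable, rather than the mere existence of a surjection $\Gamma(\sigma)\to G$. I would therefore isolate the torsion-freeness computation as the technical heart, since everything downstream --- that $\Sigma$ is a manifold and not merely an orbifold, that the branch data reproduce $\sigma$, and that the genus matches via (1) --- follows formally once $K$ is known to be a surface group.
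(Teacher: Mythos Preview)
Your proof is correct and follows the standard route via uniformization and the torsion-freeness of the kernel. The paper itself does not actually prove this lemma: it is stated as the classical ``Harvey condition'' with attribution to \cite{har} and used thereafter as a black box, so there is no in-paper argument to compare against. Your sketch supplies exactly the proof one would expect to find in the cited literature --- lifting the action to an overgroup $\Gamma$ of the surface group $K$, identifying $\Gamma$ with $\Gamma(\sigma)$, and in the converse direction using the order-preserving hypothesis to force $\ker\phi_G$ to be torsion-free so that $\mathcal{U}/\ker\phi_G$ is a genuine surface rather than an orbifold. Your emphasis on that last point as the technical heart is well placed.
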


\noindent Suppose we have a signature $(h;m_{1},m_{2},\dots,m_{r})$ for a given group $G$. Extension principle gives us a clear idea when $(h;m_{1},m_{2},\dots,m_i^{\prime},m_i^{\prime \prime},\dots,m_{r})$ is again a signature of the group G.
\subsection{Extension principle}
\begin{theorem}
Let $(h;m_{1},m_{2},\dots,m_{r})$ be a signature of a finite group $G=~PSL_2(\mathbb{F}_p)$ where $p$ is a prime number. Now $$\Gamma(h;m_{1},m_{2},\dots,m_{r})=\langle \alpha_{1},\beta_{1}, \dots ,\alpha_{h},\beta_{h},c_{1},\dots,c_{r} | c_{1}^{m_{1}},\dots,c_{r}^{m_{r}},\prod_{i=1}^{g}[\alpha_{i},\beta_{i}] \prod_{j=1}^{r}c_{j}\rangle. $$
If $c_i=c_{i1}c_{i2} \in ~ \Gamma; i \in \lbrace1,2,\dots,r\rbrace$ where $|c_{i1}|=m_i$ and $|c_{i2}|=m_i$ where $m_i$ is odd prime or $m_i=2$, then $$(h;m_{1},m_{2},\dots,m_{i},m_{i},\dots,m_{r})$$ is also a signature of $G$. It is known as \emph{extension principle}.
\end{theorem}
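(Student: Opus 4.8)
The plan is to verify the two clauses of the Harvey condition (Lemma~\ref{Harvey condition}) for the enlarged signature $\sigma' := (h; m_1, \dots, m_{i-1}, m_i, m_i, m_{i+1}, \dots, m_r)$, using the data already supplied by the assumption that $\sigma := (h; m_1, \dots, m_r)$ is a signature of $G = PSL_2(\mathbb{F}_p)$. Applying Lemma~\ref{Harvey condition} to $\sigma$ gives a surface-kernel epimorphism $\phi : \Gamma(\sigma) \to G$; write $A_j = \phi(\alpha_j)$, $B_j = \phi(\beta_j)$, $C_j = \phi(c_j)$, so that $|C_j| = m_j$, the tuple generates $G$, and $\prod_{j=1}^{h} [A_j, B_j] \prod_{j=1}^{r} C_j = 1$. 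The factorization hypothesis supplies $C_{i1}, C_{i2} \in G$ with $C_{i1}C_{i2} = C_i$ and $|C_{i1}| = |C_{i2}| = m_i$. Clause (2), the existence of the epimorphism, will be almost immediate; the real content is clause (1), namely that the genus forced by the Riemann--Hurwitz formula for $\sigma'$ is again a non-negative integer, and this is precisely where the hypothesis ``$m_i$ is an odd prime or $m_i = 2$'' enters.

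For clause (2) I would define $\phi' : \Gamma(\sigma') \to G$ on the standard generators by sending the hyperbolic generators to the same $A_j, B_j$, the elliptic generators $c_1, \dots, c_{i-1}, c_{i+1}, \dots, c_r$ to the same $C_1, \dots, \widehat{C_i}, \dots, C_r$, and the two elliptic generators of period $m_i$ produced by the split to $C_{i1}$ and $C_{i2}$ respectively. Order preservation is clear: the only new periods are the two copies of $m_i$, and $|C_{i1}| = |C_{i2}| = m_i$, while all other generators and their images are unchanged (by part (3) of the presentation theorem it suffices to check the distinguished elliptic generators). The long relation is preserved because replacing the single factor $C_i$ by $C_{i1}C_{i2} = C_i$ leaves $\prod [A_j, B_j]\prod C_j$ unchanged, hence still equal to the identity. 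Finally $\phi'$ is surjective, since its image contains $A_j, B_j$, every $C_j$ with $j \neq i$, and $C_{i1}C_{i2} = C_i$, so it contains the image of the surjection $\phi$, that is, all of $G$. Thus $\phi'$ is a surface-kernel epimorphism.

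For clause (1), subtracting the Riemann--Hurwitz identity for $\sigma$ from that for $\sigma'$ gives $2g' - 2 = (2g - 2) + |G|\bigl(1 - \tfrac{1}{m_i}\bigr)$, so that $g' = g + \frac{|G|(m_i - 1)}{2 m_i}$. Since $g$ is already an integer, it suffices to show that $|G| - |G|/m_i$ is even (note $m_i \mid |G|$ because $m_i = |C_i|$). The key arithmetic input is $4 \mid |G|$: writing $|G| = \tfrac{1}{2}\, p(p-1)(p+1)$, the two consecutive even integers $p-1, p+1$ contribute at least $2^3$ to the numerator, so $v_2(|G|) \geq 2$. If $m_i$ is an odd prime, then $|G|/m_i$ retains the full $2$-part of $|G|$ and is even; if $m_i = 2$, then $|G|/2$ still has $v_2 \geq 1$ and is even. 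In either case $|G|$ and $|G|/m_i$ are both even, their difference is even, and $g'$ is an integer. As $g' = g + \frac{|G|(m_i-1)}{2m_i} > g \geq 0$, it is a non-negative integer, so both clauses of Lemma~\ref{Harvey condition} hold for $\sigma'$ and $\sigma'$ is a signature of $G$.

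I expect the main obstacle to be clause (1), and specifically making transparent that the hypothesis on $m_i$ is exactly the condition guaranteeing the parity computation: for composite $m_i$ such as $m_i = 4$ the valuation $v_2(|G|/m_i)$ can drop to $0$ (for instance when $p \equiv 3 \pmod 8$, where $v_2(|G|) = 2$), which breaks the integrality of $g'$ and hence the argument. A secondary point worth stating carefully is the reading of the factorization hypothesis: it should be understood as a factorization in $G$ of the image $C_i = \phi(c_i)$ into two elements of order $m_i$, since order preservation under $\phi$ is guaranteed only for the distinguished elliptic generators of $\Gamma(\sigma)$ and not for arbitrary torsion elements.
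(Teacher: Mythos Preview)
Your proof is correct and follows essentially the same route as the paper: construct the new surface-kernel epimorphism by replacing the single image $C_i$ with the two factors $C_{i1},C_{i2}$, and then verify that the Riemann--Hurwitz formula for the enlarged signature still gives an integer genus by checking that $\tfrac{|G|}{2}(1-\tfrac{1}{m_i})$ is an integer when $m_i=2$ or $m_i$ is an odd prime dividing $|G|$. Your treatment is in fact more careful on two points where the paper is loose: you check surjectivity of $\phi'$ explicitly, and you correctly flag that the factorization hypothesis must really be read in $G$ (or that order preservation for the factors $c_{i1},c_{i2}\in\Gamma$ does not follow automatically from the surface-kernel property, which only controls the distinguished elliptic generators).
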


\begin{proof}
Let $(h;m_{1},m_{2},\dots,m_{r})$ be a signature of  finite group $G$. Then there exists a surface kernel epimorphism $$\phi: ~ \Gamma(h;m_{1},m_{2},\dots,m_{r}) ~ \rightarrow ~ G ~ \text{defined as } $$ $$ \phi (\alpha_i)=A_i$$ $$\phi (\beta_i)=B_i$$ $$\phi(c_i)=C_i.$$
As $c_i=c_{i1}c_{i2}$ where $|c_{i1}|=m_{i}$ and $|c_{i2}|=m_{i}$. Now $$\phi(c_i)=C_i ~ \Rightarrow ~ \phi(c_{i1}c_{i2})=C_i ~ \Rightarrow ~ \phi(c_{i1})\phi(c_{i1})=C_i$$ $$\Rightarrow C_{i1}C_{i2}= C_i ~ \text{where}~ \phi(c_{i1})=C_{i1} ~ \text{and}~ \phi(c_{i2})=C_{i2}.$$ As $\phi$ is surface kernel epimorphism, so $|C_{i1}|=m_{i}$ and $|C_{i2}|=m_{i}$. Now we can define the following map $$\psi :~ \Gamma(h;m_{1},m_{2},\dots,m_{i1},m_{i2},\dots,m_{r}) ~ \rightarrow G ~ \text{as}$$ $$ \psi(\alpha_j)=A_j, ~ \psi(\beta_j)=B_j,~ \psi(c_j)=C_j ~ \text{for} ~ j \neq i $$  and $$ \psi(c_{i1})=C_{i1}, ~ \text{and}~\psi(c_{i2})=C_{i2}.$$ Clearly $\psi $ is surface kernel epimorphism. \\
\noindent Now we have to show that 
$$1+|G|(h-1)+\frac{|G|}{2}\sum_{i=1}^r (1-\frac{1}{m_i})+\frac{|G|}{2}(1-\frac{1}{m_i})$$ is an integer.
As $(h;m_1,m_2,\dots,m_r)$ is a signature of $|G|$. So $$1+|G|(h-1)+\frac{|G|}{2}\sum_{i=1}^r (1-\frac{1}{m_i})$$ is an integer. Now $\frac{|G|}{2}(1-\frac{1}{m_i})$ is an integer if $m_i=2$ or $m_i$ is an odd prime dividing $\frac{(p-1)p(p+1)}{2}$. Hence $(h;m_{1},m_{2},\dots,m_{i},m_{i},\dots,m_{r})$ is a signature of $G$.
\end{proof}
\noindent Let $\Gamma$ be a Fuchsian group, and $\mathbb{H}/ \Gamma$ has finite area. The fundamental domain $F$ for the group $\Gamma$ defined as 
\begin{definition}
A fundamental domain $F$ for the group $\Gamma$ is an open subset of $\mathbb{H}$ which satisfy the followinf two condition
\begin{enumerate}
\item  $ \underset{\alpha \in \Gamma}{\cup} ~\alpha(cl(F))= \mathbb{H}$.
\item The images $\alpha(F)$ are pairwise disjoint $i.e. ~ \alpha_i(F)\cap \alpha_j(F)= \emptyset$ if $\alpha_i, \alpha_j \in \Gamma$, and $i \neq j$.
\end{enumerate}
\end{definition}
\noindent Thus F is a fundamental domain if every point lies in the closure of some image $\alpha(F)$ and
if two distinct images do not overlap. We say that the images of $F$ under $\Gamma$ tessellate $\mathbb{H}$.
\begin{lemma}
Let $\Gamma$ be a non-trivial Fuchsian group. Then there exists a point $p \in \mathbb{H}$ that is not a fixed point for any non-trivial element of $\Gamma$.
\end{lemma}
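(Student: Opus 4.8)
The plan is to prove the lemma by a soft counting argument: the set of points of $\mathbb{H}$ that are fixed by \emph{some} non-trivial element of $\Gamma$ is a countable union of sets of size at most one, hence countable, whereas $\mathbb{H}$ itself is uncountable, so the two cannot coincide.

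First I would recall the classification of non-identity elements of $PSL_2(\mathbb{R})$ acting on $\mathbb{H}$: each such element is elliptic, parabolic, or hyperbolic. A parabolic element has a single fixed point on the boundary $\partial\mathbb{H}=\mathbb{R}\cup\{\infty\}$ and none in $\mathbb{H}$; a hyperbolic element has two fixed points on $\partial\mathbb{H}$ and none in $\mathbb{H}$; an elliptic element has exactly one fixed point in $\mathbb{H}$ (its complex conjugate being the second root, in the lower half-plane). This is seen directly by writing $\gamma(z)=\frac{az+b}{cz+d}$ with $ad-bc=1$ and solving $\gamma(z)=z$, i.e. $cz^{2}+(d-a)z-b=0$, then reading off from the discriminant (equivalently from $|a+d|$ versus $2$) which roots, if any, have positive imaginary part. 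The conclusion of this step is that every $\gamma\in\Gamma\setminus\{1\}$ satisfies $|\operatorname{Fix}(\gamma)\cap\mathbb{H}|\le 1$.

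Next I would use discreteness. Since $\Gamma$ is a discrete subgroup of the second-countable topological group $PSL_2(\mathbb{R})$, it is at most countable. Hence the ``bad set''
$$F:=\{\,p\in\mathbb{H}\ :\ \gamma(p)=p\ \text{for some}\ \gamma\in\Gamma\setminus\{1\}\,\}=\bigcup_{\gamma\in\Gamma\setminus\{1\}}\bigl(\operatorname{Fix}(\gamma)\cap\mathbb{H}\bigr)$$
is a countable union of sets of cardinality at most one, and is therefore countable. Since $\mathbb{H}$ is an open subset of $\mathbb{C}$, it is uncountable, so $\mathbb{H}\setminus F\neq\emptyset$; any $p$ in this complement is a point fixed by no non-trivial element of $\Gamma$, which is what is claimed. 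I would also remark on an alternative for the last two steps: each $\operatorname{Fix}(\gamma)\cap\mathbb{H}$ is closed in $\mathbb{H}$ with empty interior and $\mathbb{H}$ is a complete metric space, so by the Baire category theorem $\mathbb{H}\setminus F$ is in fact dense in $\mathbb{H}$. I do not expect a real obstacle here: the only point requiring care is the classification step guaranteeing that a non-identity isometry fixes at most one interior point of $\mathbb{H}$, and the remainder of the argument is routine.
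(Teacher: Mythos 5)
Your argument is correct and complete. Note, however, that the paper does not actually prove this lemma: it simply cites Katok's book, so there is no in-paper argument to compare against line by line. The proof given in that reference runs through the dynamics of the action: one first shows that a Fuchsian group acts properly discontinuously on $\mathbb{H}$, deduces that the set of elliptic fixed points is a \emph{discrete} subset of $\mathbb{H}$ (no accumulation points), and concludes that it cannot exhaust $\mathbb{H}$. Your route is softer and more self-contained: you only need (i) the elementary classification of isometries via the trace (equivalently the discriminant of $cz^{2}+(d-a)z-b=0$), which gives $|\operatorname{Fix}(\gamma)\cap\mathbb{H}|\le 1$ for every non-identity $\gamma$, and (ii) countability of $\Gamma$, which follows from discreteness inside the second-countable group $PSL_2(\mathbb{R})$. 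A countable union of singletons cannot cover the uncountable set $\mathbb{H}$, and your Baire-category remark even upgrades the conclusion to density of the set of non-fixed points. What you lose relative to the textbook argument is the stronger structural information (discreteness of the elliptic fixed-point set), which is sometimes needed elsewhere in the theory, e.g.\ when constructing Dirichlet regions; what you gain is independence from proper discontinuity. Both steps of your proof are sound: the trace classification correctly shows that parabolic and hyperbolic elements fix no interior point and elliptic elements fix exactly one, and the countability of a discrete subgroup of a second-countable group is standard. No gap.
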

\begin{proof}
\cite{sve}.
\end{proof}
Let $\Gamma$ be a co-compact Fuchsian group, and $p \in \Gamma$ be a point which is not fixed by any point of $\Gamma$. We define the Dirichlet region to be:
$$D=~\lbrace z \in \mathbb{H}| d_{\mathbb{H}}(z,p)<d_{\mathbb{H}}(z,\alpha(p)),~ \forall~ \alpha \in \Gamma ~\backslash \lbrace id \rbrace \rbrace.$$ 

\noindent As we always consider co-compact Fuchsian group, so the Dirichlet region $D$ always convex hyperbolic polygon having finitely many edges. Now we give a description how the Dirichlet region actuate a presentation for the Fuchsian group $\Gamma$ with generators $\Sigma$. First we consider the group $\Gamma$ is torsion free, $i.e. ~ \Gamma$ is a fundamental group of a closed surface. We denote the vertex set of $D$ as $V$. Two vertices $v,w$ of $V$ are equivalent if there is a finite sequence $v= v_0,v_1,\dots ,v_n=w $ of vertices of $D$ and elements $g_1,g_2,\dots ,g_n \in \Sigma $ with $g_j v_{j-1}=v_j, ~ \forall j \in \lbrace 1,2,\dots,n\rbrace$. If $v~ \text{and}~ w$ are equivalent then we denote it by $v \sim w$. Each equivalence class of vertices is known as vertex cycle. Let $\lbrace v_1,v_2,\dots,v_m \rbrace$ consist of one vertex from each equivalence class. Let $c_i$ for $i \in \lbrace 1,2,\dots,m\ rbrace$, be the word in $\Sigma$ determined by an edge-path in the $1-$skeleton of the dual tessellation which traverses the link of $v_i$. In case that $a_0 = v_i,~ a_1= g_1 a_0,~ \dots , ~ a_r =g_ra_{r-1},~a_0=g_{r+1}a_r$ is the minimal listings of the vertex through $v_i$, then the corresponding word is $g_{r+1}g_r\dots g_2g_1$. Then the Fuchsian group $\Gamma$ has the presentation $\Gamma= \langle \Sigma |c_1,c_2,\dots,c_m \rangle$. Now the choice of different initial edge for the edge path would yield a cyclic permutation of $c_i$ and changing the orientation of the edge-path would replace $c_i$ by its inverse. In matter-of-course $\Gamma$ may have torsion elements. We assume that $\Sigma$ does not have any element of order $2$, hence each element of $\Sigma$ pairs two different edges. So $D$ is an even-sided polygon in $\mathbb{H}$. As above we follow the edge pairings to compute the vertex cycles and words $c_1,c_2,\dots ,c_m$. We also assign the integer $n_i \in \mathbb{N}$ to each cycle as describe. For $i \in \lbrace 1,~2,~\dots,~m\rbrace$, add the interror angles in $D$ at each vertex of the cycle through $v_i$. As a result we get the total angel at the image of $v_i$, say $\overline{v_i}$ in the quptient space $\mathbb{H}/\Gamma$. As $\Gamma$ is a discrete subgroup, so the angle must have the form $\frac{2\pi}{n_i}$. If $n_i=1$, then the quotient space near $v_i$ has non sigular geometric structure. Consider $1< n_i < \infty$, then $\overline{v_i}$ is a branch point of index $n_i$, and $c_i$ is an elliptic element of order $n_i$. If $n_i=\infty$, the cycle through $v_i$ consists of vertex at $\infty$, neighbourhoods of the vertices in the cycle are glued together to form a cusp in $\mathbb{H}/\Gamma$, and $c_i$ is parabolic element of infinite order. Then $\Gamma$ has the presentation $\langle \Sigma| (c_1)^{n_1},\dots,(c_m)^{n_m} \rangle$, where infinitely long relators may be excluded.\\
\noindent Consider regular $8$- gon with internal angel $\frac{2\pi}{24}$. We will identify the altenating edges. Then we have the Fuchsian group having signature $(2;3)$ (see the figure [\ref{(2;3)}]).
\newpage
\begin{figure}[H]
  \includegraphics[width=0.8\linewidth]{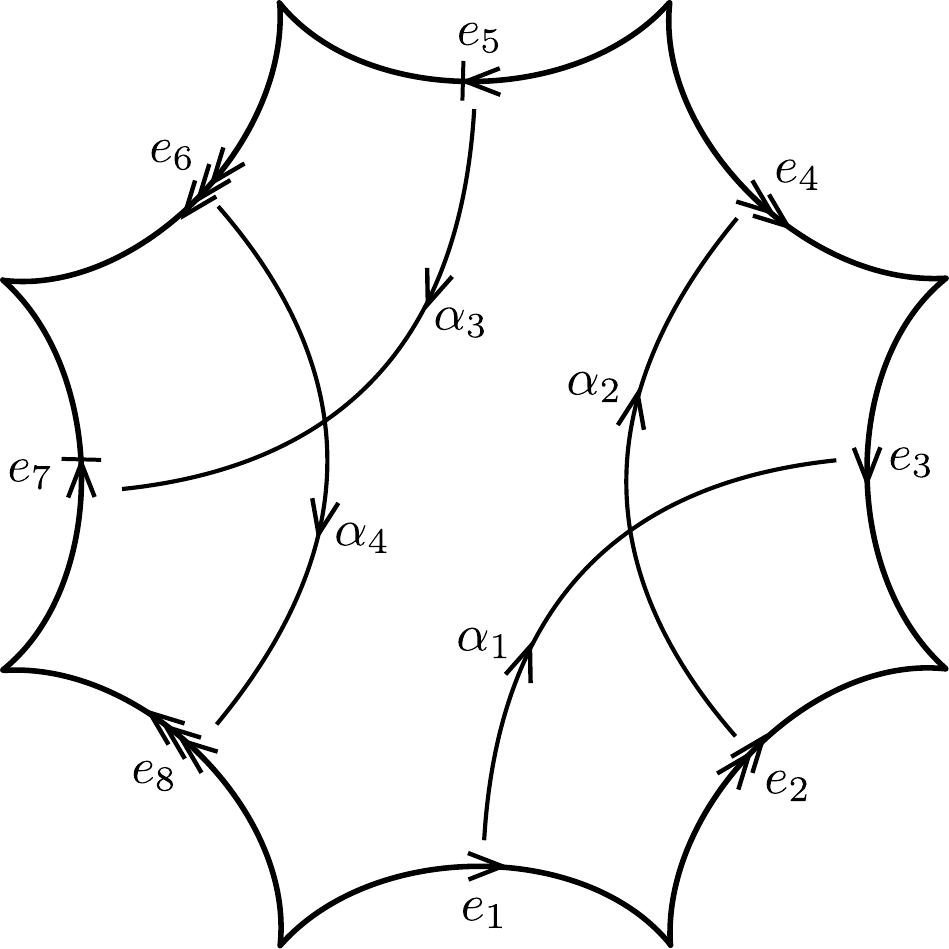}
  \caption{Regular 8-gon}
  \label{(2;3)}
\end{figure}
\noindent Now if we consider the regular $8$- gon with the internal angle $\frac{2\pi}{8}$, then we have the Fuchsian group having signature $(2;-)$ (see the figure [\ref{(2;3)}]). \\
\noindent For details see \cite{sve,elip}.

\section{Character table of $PSL_2(\mathbb{F}_p)$}

Let $G~ := GL_2(\mathbb{F}_q)$. The Borel subgroup of $G$ is denoted by $B$ is the collection of all non singular upper triangular matrix. Let $N := ~ \lbrace A = 
\begin{pmatrix}
1 & b \\
0 & 1 
\end{pmatrix}\rbrace $ and $D ~ = ~ \lbrace A = 
\begin{pmatrix}
a & 0\\
0 & d
\end{pmatrix}\rbrace \cong ~ \mathbb{F}_q^* ~ \times ~ \mathbb{F}_q^*$. Let $\mathbb{F}_q^* ~ =~ \langle \epsilon \rangle ~ s.t. ~ \sqrt{\epsilon} ~ \not\in ~ \mathbb{F}_q^*$.
Let $\mathbb{F}^{\prime}$ is a degree two extension of $\mathbb{F}_q$ and $\mathbb{F}^{\prime} ~ := ~ \mathbb{F}_q (\sqrt{\epsilon})$. So $\lbrace 1,\sqrt{\epsilon}\rbrace $ is a basis for $\mathbb{F}^{\prime}$. Let $K ~ = ~ \lbrace A = 
\begin{pmatrix}
x & y\epsilon \\
y & x
\end{pmatrix}\rbrace $ is cyclic subgroup of $G$ as $K ~ \cong ~ (\mathbb{F}^{\prime})^*$. So $|K|= q^2-1.$ Now using characteristic polynomial and minimal polynomial we easily compute the conjugacy classes of $G$. The conjugacy classes of $G$ are listed as follows

\begin{table}[ht]
\caption{Conjugacy Classes of $GL_2(\mathbb{F}_q)$} % title of Table
\centering % used for centering table
\begin{tabular}{c c c} % centered columns (4 columns)
\hline\hline %inserts double horizontal lines
Representative & No. of elements in the class & Number of class \\ [0.5ex] % inserts table
%heading
\hline % inserts single horizontal line
$a_x = 
\begin{pmatrix}
x & 0 \\
0 & x
\end{pmatrix}$ & $1$ & $q-1$ \\ % inserting body of the table
$b_x = 
\begin{pmatrix}
x & 1 \\
0 & x
\end{pmatrix}$ & $q^2-1$ & $q-1$ \\
$c_{x,y} = 
\begin{pmatrix}
x & 0 \\
0 & y
\end{pmatrix}$ & $q^2+q$ & $\frac{(q-1)(q-2)}{2}$ \\
$d_{x,y} = 
\begin{pmatrix}
x & \epsilon y \\
y & x
\end{pmatrix}$ & $q^2-q$ & $\frac{(q-1)q}{2}$ % [1ex] adds vertical space

\end{tabular}
\label{table:character table of G} % is used to refer this table in the text
\end{table}
We have total $q^2-1$ conjugacy classes, so we have to find $q^2-1$ irreducible character.

Let $\alpha ~ : ~ \mathbb{F}_q^* ~ \rightarrow ~ \mathbb{C}^*$ be an one dimensional character. Now $U_{\alpha} ~ : ~ G ~ \rightarrow ~ \mathbb{C}^*$ defined as $$U_{\alpha}(\begin{pmatrix}
a & b \\
c & d
\end{pmatrix}) ~ = ~ \alpha(ad-bc).$$
Let $V$ be the complementary $q$ dimensonal representation of the permutation representation of $G$ on $\mathbb{P}^1(\mathbb{F}_q)$. Let $V_{\alpha} ~ := V ~ \otimes U_{\alpha}$. The character value of this two representations is given by
\begin{table}[ht]
\caption{Character value of the representations $U_{\alpha} ~ \text{and} ~ V_{\alpha}$} % title of Table
\centering % used for centering table
\begin{tabular}{c c c c c} % centered columns (4 columns)
\hline\hline %inserts double horizontal lines
Representative & $a_x$ & $b_x$ & $c_{x,y}$ & $d_{x,y}$ \\ [0.5ex] % inserts table
%heading
\hline % inserts single horizontal line
$U_{\alpha}$ & $\alpha(x)^2$ & $\alpha(x)^2$ & $\alpha(x)\alpha(y)$ & $\alpha(x^2-\epsilon y^2)$ \\ % inserting body of the table
$V_{\alpha}$ & $q\alpha(x)^2$ & $0$ & $\alpha(x)\alpha(y)$ & $-\alpha(x^2-\epsilon y^2)$ % [1ex] adds vertical space

\end{tabular}
\label{table:character value of U and V} % is used to refer this table in the text
\end{table}
We use $U_{\alpha} ~ \text {and} ~ V_{\alpha}$ for the representation and as well as the character values of the corresponding representations. It is easy to check that the above two characters are irreducible. Now we have total $2q-2$ irreducible character. Now we consider $\alpha, ~ \beta ~ : \mathbb{F}_q^* ~ \rightarrow ~ \mathbb{C}^*$ two one dimensonal character. We have $$B ~ \rightarrow ~ B/N ~ = ~ D ~ = ~ \mathbb{F}_q^* ~ \times ~ \mathbb{F}_q^* ~ \rightarrow ~ ~ \mathbb{C}^* ~ \times ~ ~ \mathbb{C}^* ~ \rightarrow ~ ~ \mathbb{C}^*,$$ and $\alpha, \beta(\begin{pmatrix}
a & b \\
0 & d
\end{pmatrix})= ~ \alpha(a)\beta(d)$. Let $W_{\alpha, \beta}$ be the representation induced from $B$ to $G$ of the above representation $\alpha,\beta$. Then the character value $W_{\alpha,\beta}$ is 
\begin{table}[ht]
\caption{Character value of the representations $W_{\alpha,\beta}$} % title of Table
\centering % used for centering table
\begin{tabular}{c c c c c} % centered columns (4 columns)
\hline\hline %inserts double horizontal lines
Representative & $a_x$ & $b_x$ & $c_{x,y}$ & $d_{x,y}$ \\ [0.5ex] % inserts table
%heading
\hline % inserts single horizontal line
$W_{\alpha,\beta}$ & $(q+1)\alpha(x)\beta(x)$ & $\alpha(x)\beta(x)$ & $\alpha(x)\beta(y) ~ + ~ \beta(x)\alpha(y)$ & $0$ 

\end{tabular}
\label{table:character value of W} % is used to refer this table in the text
\end{table}

We already observe that $K \cong (\mathbb{F}^{\prime})^*$. So for any one dimensonal representation  $\phi : K \rightarrow  ~ (\mathbb{C}^*)$ we have the induced representation $Ind(\phi)$ of dimension of dimension $q^2-1$. We denote the character value of the induced representation $Ind(\phi)$ by $\phi^G$ and  it's value is given by
\begin{table}[ht]
\caption{Character value of the representations $Ind(\phi )$} % title of Table
\centering % used for centering table
\begin{tabular}{c c c c c} % centered columns (4 columns)
\hline\hline %inserts double horizontal lines
Representative & $a_x$ & $b_x$ & $c_{x,y}$ & $d_{x,y}$ \\ [0.5ex] % inserts table
%heading
\hline % inserts single horizontal line
$Ind(\phi )$ & $q(q-1)\phi(x)$ & $0$ & $0$ & $\phi(\zeta )+\phi(\zeta )^q $ 

\end{tabular}
 \label{table:character value of Ind }% is used to refer this table in the text
\end{table}

Consider the character
$$\chi_{\phi} ~ = ~ V ~ \otimes ~ W_{\alpha,1} ~ - ~ W_{\alpha,1} ~ - ~ Ind(\phi ).$$ The character value of $\chi _{\phi }$ is given as follows
\begin{table}[ht]
\caption{Character value of the representations $\chi_{\phi }$} % title of Table
\centering % used for centering table
\begin{tabular}{c c c c c} % centered columns (4 columns)
\hline\hline %inserts double horizontal lines
Representative & $a_x$ & $b_x$ & $c_{x,y}$ & $d_{x,y}$ \\ [0.5ex] % inserts table
%heading
\hline % inserts single horizontal line
$\chi_{\phi }$ & $(q-1)\alpha(x)$ & $-\alpha(x)$ & $0$ & $-(\phi(\zeta )+\phi(\zeta )^q) $ 

\end{tabular}
 \label{table:character value of chi }% is used to refer this table in the text
\end{table}
So the character table of $GL_2(\mathbb{F}_q)$ is as follows
\begin{table}[ht]
\caption{Character table of $GL_2(\mathbb{F}_q)$} % title of Table
\centering % used for centering table
\begin{tabular}{c c c c c} % centered columns (4 columns)
\hline\hline %inserts double horizontal lines
Representative & $a_x$ & $b_x$ & $c_{x,y}$ & $d_{x,y}$ \\ [0.5ex] % inserts table
%heading
\hline % inserts single horizontal line
$U_{\alpha}$ & $\alpha(x)^2$ & $\alpha(x)^2$ & $\alpha(x)\alpha(y)$ & $\alpha(x^2-\epsilon y^2)$ \\ % inserting body of the table
$V_{\alpha}$ & $q\alpha(x)^2$ & $0$ & $\alpha(x)\alpha(y)$ & $-\alpha(x^2-\epsilon y^2)$ \\
$W_{\alpha,\beta}$ & $q+1\alpha(x)\beta(x)$ & $\alpha(x)\beta(x)$ & $\alpha(x)\beta(y) ~ + ~ \beta(x)\alpha(y)$ & $0$ \\
$\chi_{\phi }$ & $(q-1)\alpha(x)$ & $-\alpha(x)$ & $0$ & $-(\phi(\zeta )+\phi(\zeta )^q) $

\end{tabular}
\label{table:character value of G} % is used to refer this table in the text
\end{table}

At this moment we are considering the group $SL_2{(\mathbb{F}_q)}$. It is a subgroup of $GL_2{(\mathbb{F}_q)}$ conataining all $2 ~ \times ~ 2$ matrices of diterminant $1$. We are taking into account the subgroup $C ~ := \lbrace \xi \in ~ (\mathbb{F}^{\prime})^* ~ | \xi ~ ^{q+1} =1 \rbrace$ is a cyclic subgroup of $(\mathbb{F}^{\prime})^*$. The conjugacy classes, the  representatives of the corresponding class, and the number of each conjugacy class type are listed below

\begin{table}[ht]
\caption{Conjugacy classes of $SL_2(\mathbb{F}_q)$} % title of Table
\centering % used for centering table
\begin{tabular}{c c c} % centered columns (4 columns)
\hline\hline %inserts double horizontal lines
Representative & $Number ~ of ~ elements ~ in ~ the ~ class$ & $Number ~ of ~ classes $ \\ [0.5ex] % inserts table
%heading
\hline % inserts single horizontal line
$id = 
\begin{pmatrix}
1 & 0 \\
0 & 1
\end{pmatrix}$ & $1$ & $1$ \\
$-id = 
\begin{pmatrix}
-1 & 0 \\
0 & -1
\end{pmatrix}$ & $1$ & $1$ \\ % inserting body of the table
$ 
\begin{pmatrix}
1 & 1 \\
0 & 1
\end{pmatrix}$ & $\frac{q^2-1}{2}$ & $1$ \\
$ 
\begin{pmatrix}
1 & \epsilon \\
0 & 1
\end{pmatrix}$ & $\frac{q^2-1}{2}$ & $1$ \\
$ 
\begin{pmatrix}
-1 & 1 \\
0 & -1
\end{pmatrix}$ & $\frac{q^2-1}{2}$ & $1$ \\
$ 
\begin{pmatrix}
-1 & \epsilon  \\
0 & -1
\end{pmatrix}$ & $\frac{q^2-1}{2}$ & $1$ \\
$ 
\begin{pmatrix}
x & 0 \\
0 & x^{-1}
\end{pmatrix},~ x ~ \neq ~1$ & $q(q+1)$ & $\frac{q-3}{2}$ \\
$ 
\begin{pmatrix}
x & \epsilon y \\
y & x
\end{pmatrix}, ~ x ~ \neq ~1$ & $q(q-1)$ & $\frac{q-1}{2}$ 
\end{tabular}
\label{table:conjugacy class of SL} % is used to refer this table in the text
\end{table} 

So, the total number of conjugacy class is $q+4$. We have to fond out $q+4$ irreducible characters. We have 
\begin{enumerate}
\item[1.] The restriction of $U_{\alpha}$ always gives us trivial representation. Hence it is irreducible.
\item[2.] The restriction of $V_{\alpha}$ yield $V$ which is irreducible.
\item[3.] The restriction of $W_{\alpha}$ of $W_{\alpha,1}$ is irreducible if $\alpha^2 ~ \neq ~ 1,$ and $W_{\alpha} ~ = ~ W_{\beta}$ for $\alpha ~ = ~ \beta ~ \text{or}~ \alpha ~ = ~ \beta^{-1}$. So we have $\frac{q-3}{2}$ irreducible character of dimension $q+1$.
\item[4.] If $ \tau^2 ~ = ~ 1$ and $\tau ~ \neq ~ 1$ then $W_{\tau} ~ = ~ W_{tau}^{\prime} ~ \oplus ~ W_{tau}^{\prime \prime}$ where $W^{\prime} ~ \text{and} ~ W^{\prime \prime}$ are irreducible character of dimension $\frac{q+1}{2}$.
\item[5.] The restriction of $\phi$ on the subgroup $C$ gives $\frac{q-1}{2}$ more irreducible character $\chi_{\phi}$ of dimension $q-1$ when $\phi^{2} ~ = ~ \neq 1$.
\item[6.] For $\psi^2 ~ = ~ 1$ and $\phi ~ \neq ~ 1$, the character is sum of two irreducible character $\chi_{\psi}= ~ \chi_{\psi}^{\prime} ~ \oplus ~\chi_{\psi}^{\prime \prime}$ of dimension $\frac{q-1}{2}$.  
\end{enumerate}
Altogether this gives us $q+4$ distinct irreducible characters So this is the complete list of characters.

Hence the following is the  character table of $SL_2{\mathbb{F}_q}$.
\begin{table}[ht]
\caption{Character table of $SL_2(\mathbb{F}_q)$} % title of Table
\centering % used for centering table
\resizebox{\columnwidth}{!}{
\begin{tabular}{c c c c c c c c c} % centered columns (4 columns)
\hline\hline %inserts double horizontal lines
Representative & $\begin{pmatrix}
1 & 0 \\
0 & 1
\end{pmatrix}$ & $\begin{pmatrix}
-1 & 0 \\
0 & -1
\end{pmatrix}$ & $\begin{pmatrix}
1 & 1 \\
0 & 1
\end{pmatrix}$ & $\begin{pmatrix}
1 & \epsilon  \\
0 & 1
\end{pmatrix}$ & $\begin{pmatrix}
-1 & 1 \\
0 & -1
\end{pmatrix}$ & $\begin{pmatrix}
-1 & \epsilon  \\
0 & -1
\end{pmatrix}$ & $\begin{pmatrix}
x & 0 \\
0 & x
\end{pmatrix}, ~ x ~ \neq ~1$ & $\begin{pmatrix}
x & \epsilon y \\
y & x
\end{pmatrix}, ~ x ~ \neq ~1$ \\ [0.5ex] % inserts table
%heading
\hline % inserts single horizontal line
$U$ & $1$ & $1$ & $1$ & $1$ & $1$ & $1$ & $1$ & $1$ \\ % inserting body of the table
$V$ & $q$ & $q$ & $0$ & $0$ & $0$ & $0$ & $1$ & $-1$ \\
$W_{\alpha}$ & $q+1$ & $(q+1) \alpha (-1)$ & $1$ & $1$ & $\alpha (-1)$ & $\alpha (-1)$ & $\alpha (x)+ \alpha (x^{-1})$ & $0$ \\
$W_{\tau}^{\prime}$ & $\frac{q+1}{2} $ & $\frac{q+1}{2} \tau (-1)$ & $s$ & $t$ & $s^{\prime}$ & $t^{\prime}$ & $\frac{\tau(x) ~ + \tau(x)^{-1}}{2}$ & $0$ \\
$W_{\tau}^{\prime \prime}$ & $\frac{q+1}{2} $ & $\frac{q+1}{2} \tau (-1)$ & $t$ & $s$ & $t^{\prime}$ & $s^{\prime}$ & $\frac{\tau(x) ~ + \tau(x)^{-1}}{2}$ & $0$ \\
$\chi_{\phi}$ & $q-1$ & $q-1 \alpha(-1)$ & $-1$ & $-1$ & $ - \phi(-1)$ & $- \phi(-1)$ & $0$ & $-(\phi(\zeta) ~ + ~ \phi(\zeta))^{q})$ \\
$\chi_{\psi}^{\prime}$ & $\frac{q-1}{2}$ & $\frac{q-1}{2} \tau(-1)$ & $u$ & $v$ & $u^{\prime} $ & $v^{\prime}$ & $0$ & $-\frac{\psi(\zeta) ~ + \psi(\zeta))^{q}}{2}$ \\
$\chi_{\psi}^{\prime \prime}$ & $\frac{q-1}{2}$ & $\frac{q-1}{2} \tau(-1)$ & $v$ & $u$ & $v^{\prime} $ & $u^{\prime}$ & $0$ & $-\frac{\psi(\zeta) ~ + \psi(\zeta))^{q}}{2}$

\end{tabular}}
\label{table:character value of SL(2,q)} % is used to refer this table in the text
\end{table}

Now we are going to explain the value of $s, ~ t, ~ s^{\prime}, ~ t ^{\prime}, ~ u, ~ v, ~ u^{\prime}, ~ v^{\prime} $. \\
Let $\omega ~ = ~ \tau(-1) ~ i.e. ~ \omega ~ = ~ -1 ~ \text {when } ~ q ~ \equiv ~ 3 ~ (mod4)$. Now $$ s,~ t ~ = ~ \frac{-1 \pm \iota q}{2}, \text{and} ~ s^{\prime}~ = ~ \tau(-1)s, ~ t^{\prime}~ = ~ \tau(-1)t.$$
Similarly $\psi(-1)=\iota$ when $ q ~ \equiv ~ 3 ~ (mod4)$, and $$u ,~ v ~ = ~ \frac{-1 \pm \iota q}{2}, \text{and} ~ u^{\prime}~ = ~ \tau(-1)u, ~ v^{\prime}~ = ~ \tau(-1)v.$$

We are going to state the following lemma to sum up the character table of $PSL_2(\mathbb{F}_q)$ for $q ~ \equiv ~ 3 ~ (mod ~ 4)$.
\begin{lemma}
Let $N$ be a normal subgroup of $G$.
\begin{itemize}
\item[1.] If $\chi$ is a character of $G$ and $N ~ \subseteq ~ Ker ~ \chi$, then $\chi$ is constant on the cosets of $N$ in $G$ and the function $\hat{\chi}$ defined as $\hat{\chi}(gN)~ = ~ \chi(g)$ is a character of $G/N$.
\item[2.] If $\hat{\chi}$ is a character of $G/N$, then the function defined by $\chi(g)~ = ~ \hat{\chi}(gN)$ is a character of $G$.
\item[3.] Let $Irr(G)$ be the set of all irreducible character of a group $G$. Then $\hat{\chi} ~ \in ~ Irr(G/N)$ if and only if $\chi ~ \in ~ Irr(G)$.
\end{itemize}
\end{lemma}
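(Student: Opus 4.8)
The plan is to prove all three parts by passing freely between a representation and the character it affords, and to derive part~3 from the preservation of the standard Hermitian inner product on class functions. Throughout, I realize a character $\chi$ of $G$ as the trace of a complex representation $\rho\colon G\to GL(V)$, and I write $\pi\colon G\to G/N$ for the natural projection.

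For part~1, the first step I would record is the standard fact that $\ker\chi=\ker\rho$. Indeed, if $n\in\ker\chi$ then $\chi(n)=\chi(1)=\dim V$; since $\rho(n)$ has finite order it is diagonalizable with eigenvalues that are roots of unity, and their sum can equal $\dim V$ only when every eigenvalue is $1$, forcing $\rho(n)=I$. Hence $N\subseteq\ker\chi=\ker\rho$, so $\rho$ factors as $\rho=\hat\rho\circ\pi$ through a well-defined representation $\hat\rho\colon G/N\to GL(V)$. Its character is exactly $\hat\chi(gN)=\operatorname{tr}\hat\rho(gN)=\operatorname{tr}\rho(g)=\chi(g)$, so $\hat\chi$ is a character of $G/N$. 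That $\chi$ is constant on cosets of $N$ is then immediate: for $n\in N$ we have $\chi(gn)=\operatorname{tr}(\rho(g)\rho(n))=\operatorname{tr}\rho(g)=\chi(g)$.

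For part~2, the construction runs in the opposite direction and requires no such argument. Given a character $\hat\chi$ of $G/N$ afforded by $\hat\rho\colon G/N\to GL(V)$, I would simply set $\rho:=\hat\rho\circ\pi\colon G\to GL(V)$. As a composition of homomorphisms this is a genuine representation of $G$, and its character is $\chi(g)=\operatorname{tr}\rho(g)=\operatorname{tr}\hat\rho(gN)=\hat\chi(gN)$, as required.

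For part~3, I would invoke the irreducibility criterion $\langle\chi,\chi\rangle=1$ together with the fact that a sum over $G$ splits as a double sum over cosets and their elements. Since $\chi$ is constant on each coset $gN$ by part~1 and $|G|=|N|\,|G/N|$, a direct computation gives
\begin{equation*}
\langle\chi,\chi\rangle_G=\frac{1}{|G|}\sum_{g\in G}|\chi(g)|^2=\frac{1}{|N|\,|G/N|}\sum_{gN\in G/N}|N|\,|\hat\chi(gN)|^2=\langle\hat\chi,\hat\chi\rangle_{G/N}.
\end{equation*}
Because a character is irreducible precisely when its self inner product equals $1$, this identity yields $\hat\chi\in\operatorname{Irr}(G/N)$ if and only if $\chi\in\operatorname{Irr}(G)$, completing the lemma. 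The only point requiring genuine care — and the one I would flag as the crux — is the identity $\ker\chi=\ker\rho$ used in part~1, since everything else follows formally from functoriality of the trace and the coset decomposition; the remaining steps are routine.
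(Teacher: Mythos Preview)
Your proof is correct and is the standard argument for this classical lemma. However, the paper does not actually prove this statement: it is quoted without proof as a known fact from character theory (the paper's references \cite{fulton} and \cite{char} cover it), and is used only to pass from the character table of $SL_2(\mathbb{F}_q)$ to that of $PSL_2(\mathbb{F}_q)$. So there is nothing to compare against; you have supplied a complete proof where the paper simply cites the literature.
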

The next table completes the computations of the character table of the group $PSL_2(\mathbb{F}_q)$ for $q ~ \equiv ~3 ~ (mod ~ 4)$.
\begin{table}[ht]
\caption{Character table of $PSL_2(\mathbb{F}_q)$} % title of Table
\centering % used for centering table
\resizebox{\columnwidth}{!}{
\begin{tabular}{c c c c c c c} % centered columns (4 columns)
\hline\hline %inserts double horizontal lines
Representative & $\begin{pmatrix}
1 & 0 \\
0 & 1
\end{pmatrix}$ & $\begin{pmatrix}
1 & 1 \\
0 & 1
\end{pmatrix}$ & $\begin{pmatrix}
1 & \epsilon  \\
0 & 1
\end{pmatrix}$  & $\begin{pmatrix}
x & 0 \\
0 & x^{-1}
\end{pmatrix}, ~ x ~ \neq ~1$ & $\begin{pmatrix}
x & \epsilon y \\
y & x
\end{pmatrix}, ~ x ~ \neq ~1$ & $\begin{pmatrix}
0 & -1 \\
1 & 0
\end{pmatrix}$ \\ [0.5ex] % inserts table
%heading
\hline % inserts single horizontal line
$U$ & $1$ & $1$ & $1$ & $1$ & $1$ & $1$ \\ % inserting body of the table
$V$ & $q$ & $0$ & $0$ & $1$ & $-1$ & $1$ \\
$W_{\alpha}$ & $q+1$  & $1$ & $1$ &  $\frac{\alpha (x)+ \alpha (x^{-1})}{2}$ & $0$ & $0$ \\
$\chi_{\phi}$ & $q-1$ & $-1$ & $-1$ & $0$ & $-(\phi(\zeta) ~ + ~ \phi(\zeta)^q)$ & $-(\phi(-1) ~ + ~ \phi(-1)^q)$ \\
$\chi_{\psi}^{\prime}$ & $\frac{q-1}{2}$ & $u$ & $v$ & $0$ & $\frac{-(\psi(\zeta) ~ + ~ \psi(\zeta)^q)}{2}$ & $\frac{-(\psi(-1) ~ + ~ \psi(-1)^q)}{2}$ \\
$\chi_{\psi}^{\prime \prime}$ & $\frac{q-1}{2}$ & $v$ & $u$ & $0$ & $\frac{-(\psi(\zeta) ~ + ~ \psi(\zeta)^q)}{2}$ & $\frac{-(\psi(-1) ~ + ~ \psi(-1)^q)}{2}$
\end{tabular}}
\label{table:character value of PSL(2,q)} % is used to refer this table in the text
\end{table}
For more one can follow \cite{fulton}.
\section{Signature of $PSL_2(\mathbb(F)_p)$ for $p ~ \equiv ~3 ~(mod ~ 4)$}
\begin{lemma}
$(0;~ 2^{[a_2]},~ 3^{[a_3]},~ 4^{[a_4]},~ 5^{[a_5]},~ d^{[a_d]},~ \frac{p-1}{2}^{[a_{\frac{p-1}{2}}]},~ \frac{p+1}{2}^{[a_{\frac{p+1}{2}}]},~ p^{[a_p]})$ is a signature for $PSL_2(\mathbb{F}_p)$ for $P ~ \equiv ~ 3 ~ (mod ~ 4)$ if and only if $$a_2.(1-\frac{1}{2})~+a_3.(1-\frac{1}{3})~+a_4.(1-\frac{1}{4})~+a_5.(1-\frac{1}{5})$$ $$+a_d.(1-\frac{1}{d})~+a_{\frac{p-1}{2}}.(1-\frac{2}{p-1})~+a_{\frac{p+1}{2}}.(1-\frac{2}{p+1})~+a_p.(1-\frac{1}{p}) ~ \geq ~2.$$
\label{h=0}
\end{lemma}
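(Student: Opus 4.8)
The plan is to unwind the Harvey condition (Lemma~\ref{Harvey condition}) in the special case $h=0$. Write $\mu:=\sum_{i=1}^{r}\bigl(1-\tfrac1{m_i}\bigr)$ for the sum over all periods; after grouping repeated periods this is exactly the left-hand side of the asserted inequality. When $h=0$, clause~(1) of Lemma~\ref{Harvey condition} says the surface on which $G=PSL_2(\mathbb F_p)$ acts has genus
$$ g \;=\; 1+\frac{|G|}{2}\,\bigl(\mu-2\bigr),\qquad |G|=\frac{p(p^{2}-1)}{2}, $$
so $\mu\ge 2$ is literally the statement $g\ge 1$. Thus the lemma splits into: (i) realisability forces $\mu\ge 2$; and (ii) $\mu\ge 2$ yields both an admissible (non-negative integer) genus and a surface-kernel epimorphism.

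For (i), assume the tuple is a signature. Then $g$ is a non-negative integer, so $\mu\ge 2-2/|G|>0$; it remains to exclude $g=0$. If $g=0$ then, since $h=0$ as well, $G$ acts faithfully on the $2$-sphere, hence is conjugate into $SO(3)$ and so is cyclic, dihedral, $A_4$, $S_4$, or $A_5$. For $p\equiv 3\pmod 4$ with $p\ge 7$ the simple group $PSL_2(\mathbb F_p)$ (of order $\ge 168$) is none of these, so $g\ge 1$ and therefore $\mu\ge 2$. The value $p=3$, where $PSL_2(\mathbb F_3)\cong A_4$ genuinely acts on $S^2$, is treated separately.

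For (ii), suppose $\mu\ge 2$. Then $g=1+\tfrac{|G|}{2}(\mu-2)\ge 1$; that it is an integer is the divisibility computation already used in the proof of the Extension Principle — one checks $\tfrac{|G|}{2}\bigl(1-\tfrac1{m_i}\bigr)\in\mathbb Z$ for every admissible period $m_i$, using $m_i\in\{2,p\}$ or $m_i\mid\tfrac{p\pm1}{2}$ together with $p\equiv3\pmod4$. It then remains to build a surjection $\phi\colon\Gamma(0;m_1,\dots,m_r)\to G$ preserving the orders of the torsion generators. By the presentation, for $h=0$ this means choosing $C_1,\dots,C_r\in G$ with $|C_i|=m_i$, $C_1\cdots C_r=1$, and $\langle C_1,\dots,C_r\rangle=G$. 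I would count the product-one tuples with $C_i$ ranging over a fixed conjugacy class $\mathcal C_i$ of order $m_i$ by the Frobenius formula
$$ \#\{(C_i)\in\mathcal C_1\times\cdots\times\mathcal C_r:\ C_1\cdots C_r=1\}=\frac{|\mathcal C_1|\cdots|\mathcal C_r|}{|G|}\sum_{\chi\in\operatorname{Irr}(G)}\frac{\chi(\mathcal C_1)\cdots\chi(\mathcal C_r)}{\chi(1)^{\,r-2}}, $$
note that $\mu\ge 2$ forces $r\ge 3$, and observe that the trivial character contributes the positive main term $|\mathcal C_1|\cdots|\mathcal C_r|/|G|$ (of size $\asymp p^{2r-3}$, as the relevant classes have size $\asymp p^{2}$), while every other irreducible character of $PSL_2(\mathbb F_p)$ has degree $\ge\tfrac{p-1}{2}$ and, by Table~\ref{table:character value of PSL(2,q)}, has class values of absolute value $O(\sqrt p)$ — indeed $O(1)$ off the unipotent class — so the error terms are dominated for $r\ge 3$. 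This gives at least one product-one tuple; to upgrade it to a generating one, subtract the tuples lying inside a proper, hence maximal, subgroup, using the explicit list of maximal subgroups of $PSL_2(\mathbb F_p)$ recalled in the Introduction (dihedral of order $p\pm1$, the solvable group of order $\tfrac{p(p-1)}{2}$, and $A_4,S_4,A_5$): each maximal $M$ contributes at most $[G:M]$ times the corresponding count inside $M$, which is of strictly smaller order of magnitude, so a generating tuple survives and yields $\phi$.

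The main obstacle is making this last comparison \emph{uniform} in $p$ and over all admissible period tuples $(m_1,\dots,m_r)$ — most delicately when $r=3$ (triangle-group signatures) or when some $m_i$ equals $p$ or $\tfrac{p\pm1}{2}$ — which in effect amounts to reproving, with explicit constants, a Frobenius-type genus-zero realisation theorem for the family $PSL_2(\mathbb F_p)$. Secondary care is needed on the converse side for the boundary signatures with $\mu$ equal or close to $2$ (and for sporadically small $p$), where the domain $\Gamma(0;m_1,\dots,m_r)$ is spherical or Euclidean and cannot surject onto a non-solvable group, so such tuples must be excluded or checked by hand.
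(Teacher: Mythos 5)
You should first be aware that the paper does not prove this lemma at all: its entire proof is the citation ``See \cite{oza}'', so your proposal is not being measured against an internal argument but is an attempt to reconstruct the external theorem of \"Ozaydin--Simmons--Taback. Your frame is the right one (Harvey's condition reduces realisability to ``$g=1+\tfrac{|G|}{2}(\mu-2)$ is a non-negative integer'' plus the existence of a surface-kernel epimorphism, and the latter is attacked by a Frobenius count sieved over the maximal subgroups listed in the introduction), but the analytic step fails exactly where you flag it, at $r=3$, and that failure is not a removable technicality. In the Frobenius sum $\sum_{\chi}\chi(\mathcal C_1)\chi(\mathcal C_2)\chi(\mathcal C_3)/\chi(1)$ the trivial character contributes $1$, while the bounds you invoke --- $|\chi(\mathcal C_i)|=O(1)$ on semisimple classes, $\chi(1)\ge\tfrac{p-1}{2}$, and roughly $\tfrac{p}{2}$ irreducible characters --- only bound the nontrivial contribution by about $\tfrac{p}{2}\cdot\tfrac{16}{p-1}\approx 8$, which does not beat $1$; on unipotent classes the values $u,v$ have modulus about $\tfrac{\sqrt p}{2}$ and the situation is worse. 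So positivity of the count for triangle signatures requires genuine cancellation in the character sums (i.e.\ the exact computations of Macbeath type), not the crude domination you assert, and the $r=3$ case is precisely the essential one.

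More seriously, the boundary phenomenon you relegate to ``secondary care'' is an outright counterexample to the lemma as stated, so no completion of your argument (or any argument) can prove the ``if'' direction in this form. Take $a_3=3$ and all other $a_i=0$: the left-hand side is $3\bigl(1-\tfrac13\bigr)=2$, so the inequality holds, yet $\Gamma(0;3,3,3)$ is the Euclidean $(3,3,3)$ orbifold group, an extension of $\mathbb Z^2$ by $\mathbb Z/3$ and hence metabelian, and a non-trivial perfect group such as $PSL_2(\mathbb F_p)$ is not a quotient of a metabelian group. The same applies to $(0;2^{[4]})$, to $(0;2,4,4)$ whenever $4$ is an element order, and to $(0;2,3,6)$ for $p=11$. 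The correct theorem must therefore carry a strict inequality or an explicit list of exceptional (spherical and Euclidean, and a few hyperbolic) signatures, as the statement in \cite{oza} in fact does; your proposal would prove such a corrected statement but cannot prove the one printed here.
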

\begin{proof}
See \cite{oza}.
\end{proof}
\begin{lemma}
$(1;~ 2^{[a_2]},~ 3^{[a_3]},~ 4^{[a_4]},~ 5^{[a_5]},~ d^{[a_d]},~ \frac{p-1}{2}^{[a_{\frac{p-1}{2}}]},~ \frac{p+1}{2}^{[a_{\frac{p+1}{2}}]},~ p^{[a_p]})$ is a signature for $PSL_2(\mathbb{F}_p)$ for $P ~ \equiv ~ 3 ~ (mod ~ 4)$ if and only if atleast one of $a_i ~ \geq ~ 1$.
\label{h=1}
\end{lemma}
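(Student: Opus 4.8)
I would check the two clauses of the Harvey criterion (Lemma~\ref{Harvey condition}) for the signature $\sigma=(1;m_1,\dots,m_r)$, where $m_1,\dots,m_r$ is the multiset of periods recorded by the exponents $a_\bullet$ (so each period that occurs is, implicitly and of necessity, a genuine element order of $G:=PSL_2(\mathbb{F}_p)$, else no surface-kernel epimorphism can exist). For the ``only if'' direction: if all $a_i=0$ then $\sigma=(1;-)$ and $\Gamma(\sigma)=\langle\alpha_1,\beta_1\mid[\alpha_1,\beta_1]\rangle\cong\mathbb{Z}^2$ is abelian, so a surface-kernel epimorphism onto $G$ would realize the nonabelian simple group $G$ as a quotient of $\mathbb{Z}^2$, which is impossible. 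Hence at least one $a_i\ge 1$.

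\noindent For the ``if'' direction assume $r\ge 1$. The Riemann--Hurwitz clause forces the orbit genus
$$g=1+\frac{|G|}{2}\Bigl(\frac{a_2}{2}+\frac{2a_3}{3}+\frac{3a_4}{4}+\frac{4a_5}{5}+\frac{(d-1)a_d}{d}+\frac{(p-3)a_{\frac{p-1}{2}}}{p-1}+\frac{(p-1)a_{\frac{p+1}{2}}}{p+1}+\frac{(p-1)a_p}{p}\Bigr),$$
which is $\ge 1>0$, and $g\in\mathbb{Z}$ is a brief divisibility check (using $8\mid p^2-1$, $6\mid p(p^2-1)$, and, for each $m\in\{4,5,d,\frac{p-1}{2},\frac{p+1}{2},p\}$, the fact that $m$ being an element order of $G$ supplies the divisor of $\frac{|G|}{2}$ clearing the denominator $m$; e.g.\ $\frac{|G|}{2}\bigl(1-\frac{2}{p-1}\bigr)=\frac{|G|}{2}-\frac{|G|}{p-1}\in\mathbb{Z}$). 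For the epimorphism clause I need $A,B,C_1,\dots,C_r\in G$ with $|C_j|=m_j$, with $[A,B]\,C_1\cdots C_r=1$, and with $\langle A,B,C_1,\dots,C_r\rangle=G$. Pick $C_1,\dots,C_r$ of the prescribed orders (they exist) so that $C_1\cdots C_r\ne 1$ --- automatic when $r=1$ since $m_1\ge 2$, and arranged for $r\ge 2$ by moving the $C_j$ inside their classes --- and set $g:=(C_1\cdots C_r)^{-1}\ne 1$. Then $\alpha_1\mapsto A$, $\beta_1\mapsto B$, $c_j\mapsto C_j$ is a surface-kernel epimorphism as soon as $[A,B]=g$ and $\langle A,B\rangle=G$. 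So the lemma reduces to: \emph{every non-identity element of $PSL_2(\mathbb{F}_p)$ is the commutator of a pair generating $PSL_2(\mathbb{F}_p)$.}

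\noindent I would prove that statement inside $SL_2(\mathbb{F}_p)$ using the Fricke identity
$$\operatorname{tr}[A,B]=\operatorname{tr}(A)^2+\operatorname{tr}(B)^2+\operatorname{tr}(AB)^2-\operatorname{tr}(A)\operatorname{tr}(B)\operatorname{tr}(AB)-2,$$
together with the classical fact (Macbeath) that a pair $A,B\in SL_2(q)$ generates $PSL_2(q)$ unless its trace triple $(x,y,z)=(\operatorname{tr}A,\operatorname{tr}B,\operatorname{tr}AB)$ is ``exceptional'', and with the fact that $\langle A,B\rangle$ is reducible exactly when $\operatorname{tr}[A,B]=2$. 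Given a target element of order $m$, it has trace $\tau\ne 2$ in $SL_2(\mathbb{F}_p)$ unless $m=p$, in which case take $\tau=-2$; the surface $x^2+y^2+z^2-xyz-2=\tau$ in $\mathbb{A}^3(\mathbb{F}_p)$ carries $\asymp p^2$ points while the exceptional triples on it (the dihedral ones --- which require a generator of trace $0$ --- and the finitely many $A_4,S_4,A_5$ ones) number $O(p)$, so for $p$ large there is a non-exceptional triple on the surface, hence a pair $A,B$ with $\langle A,B\rangle=SL_2(\mathbb{F}_p)$ and $[A,B]$ of the prescribed order in $PSL_2(\mathbb{F}_p)$; here $p\equiv 3\pmod 4$ helps, since then $-1$ is a non-square and the dihedral triples on the $\tau=-2$ surface $x^2+y^2+z^2=xyz$ collapse to the single point $(0,0,0)$. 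A character-theoretic count of generating pairs with a fixed commutator, via the character table of $PSL_2(\mathbb{F}_p)$ derived above and the maximal-subgroup list recalled in the introduction, is an alternative route. The main obstacle is exactly this last step: making the ``non-exceptional point exists on the surface'' argument airtight simultaneously for all the allowed target orders (the order-$p$ target and the small orders $2,3$ being the ticklish ones) while keeping the generation hypothesis. For $r\ge 2$ one can sidestep it entirely by instead placing a generating subset of $G$ among the $C_j$ (for instance $PSL_2(\mathbb{F}_p)$ is $(2,3)$-generated, and is generated by involutions), after which only $[A,B]=g$ for \emph{some} $A,B$ is needed --- Ore's theorem for $PSL_2$.
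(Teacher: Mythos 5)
Your proposal is correct in outline but takes a genuinely different route from the paper on the essential step. Both arguments share the same skeleton: reduce via the Harvey criterion (Lemma~\ref{Harvey condition}) to producing $A,B,C_1,\dots,C_r$ with $[A,B]C_1\cdots C_r=1$, the prescribed orders, and generation. They diverge on how the one-period cases $(1;m)$ are settled. The paper works directly from the character table it computed: for each $m$ in the list it fixes a class representative $g$ of order $m$ and an auxiliary class $X^G$ (of order $p$, $\tfrac{p\pm1}{2}$, etc., chosen to avoid the maximal subgroups) and verifies that the Frobenius-type sum $\sum_\chi |\chi(X)|^2\overline{\chi(g)}/\chi(1)$ is nonzero, so that $g=[X',B]$ for some conjugate $X'$ of $X$; this is precisely the ``character-theoretic count'' you mention only as an alternative. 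You instead compress everything into the single statement that every non-identity element of $PSL_2(\mathbb{F}_p)$ is the commutator of a generating pair, attacked via the Fricke identity and Macbeath's exceptional trace triples on the surface $x^2+y^2+z^2-xyz-2=\tau$. Your route is more uniform (one argument instead of nine case analyses), and your ``only if'' direction --- $\Gamma(1;-)\cong\mathbb{Z}^2$ cannot surject onto a nonabelian simple group --- is a step the paper omits entirely. The costs: your point count is only asymptotic in $p$, so small primes and the delicate target orders $2$, $3$, $p$ would still need hand- or character-checking (as you yourself flag); and your $r\ge 2$ ``sidestep'' via generation by the $C_j$ alone fails for signatures like $(1;2,2)$, where two involutions generate only a dihedral group --- though your main reduction already covers those. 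For $r\ge 2$ the paper's device is also different from yours: it splices the known generating triple of the signature $(0;2,3,p)$ from Lemma~\ref{h=0} into genus one by writing the order-$p$ generator as a commutator (quietly invoking Ore for $PSL_2$, as you do explicitly), then extends inductively.
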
 
\begin{proof}
We first prove that $(1;m)$ is a signature when $m ~ \in \lbrace ~ 2,~ 3,~ 4,~ 5,~ d,~ \frac{p-1}{2},~ \frac{p+1}{2},~ p \rbrace$.
Now $p \equiv 3 ~ (mod 4) \\ \Rightarrow p-1 \equiv 2 ~ (mod 4) \\ \Rightarrow p-1=2(2k+1); \text{ for some positive integer k } \\ \Rightarrow 2 \nmid \frac{p-1}{2}$. The above observation shows us an element of order 2 never belongs to the solvable group of order $\frac{p.p-1}{2}$.
\begin{enumerate}
\item[Case 1.] We will prove $(1;2)$ is a signature of the group $PSL_2(\mathbb{F}_p)$ for $p \equiv 3 ~ (mod4)$. Consider \[
   g=
  \left[ {\begin{array}{cc}
   0 & -1 \\
   1 & 0 \\
  \end{array} } \right]
\] 
and 
\[
   X=
  \left[ {\begin{array}{cc}
   1 & 1 \\
   0 & 1 \\
  \end{array} } \right]
\]
or \[
   X=
  \left[ {\begin{array}{cc}
   1 & \epsilon \\
   0 & 1 \\
  \end{array} } \right]
.\]
Then we have $$\sum_{\chi \in Irr(PSL_2(\mathbb{F}_p))}  ~ \frac{|\chi(X)|^2 \overline{\chi(g)}}{\chi(1)} ~ = ~ 1- ~ \frac{1}{p-1}[\overline{\phi(-1)}+\overline{\phi(-1)}^p] ~ - ~ \frac{1+p^2}{2\dot(p-1)}[\overline{\psi(-1)}+\overline{\psi(-1)}^p] ~ \neq ~ 0.$$
\noindent Hence $(1;2)$ is a signature of $PSL_2(\mathbb{F}_p)$.
\item[Case 2.] Now we consider $(1;3)$. As $3|(p-1)p(p+1), \\ \text{ so either } ~ 3| \frac{p-1}{2} ~ \text{ or } ~ 3| \frac{p+1}{2}.$
\begin{itemize}
\item[Subcase 1.] If $3~ | \frac{p-1}{2}$. We take $X$ an element of order $\frac{p+1}{2}$ and $g$ is of order $3$. Now consider \[
   X=
  \left[ {\begin{array}{cc}
   x^{\prime} & y^{\prime}\epsilon \\
   y^{\prime} & x^{\prime} \\
  \end{array} } \right]
\]
and \[
   g=
  \left[ {\begin{array}{cc}
   x & 0 \\
   0 & x^{-1} \\
  \end{array} } \right]
\] where $|X|= \frac{p-1}{2} \text{ and } |g|=3$. Now $$\sum_{\chi \in Irr(PSL_2(\mathbb{F}_p))}  \frac{|\chi(X)|^2 \overline{\chi(g)}}{\chi(1)} ~ = ~ 1+ ~ \frac{1}{q} \neq ~ 0.$$

\item[Subcase 2.]
We can take \[
   g=
  \left[ {\begin{array}{cc}
   x & y\epsilon \\
   y & x \\
  \end{array} } \right]
\] where $|g|=3$ and \[
   X=
  \left[ {\begin{array}{cc}
   x^{\prime} & y^{\prime}\epsilon \\
   y^{\prime} & x^{\prime} \\
  \end{array} } \right]
\] where $|X|= \frac{p+1}{2}$. Now $$\sum_{\chi \in Irr(PSL_2(\mathbb{F}_p))}  \frac{|\chi(X)|^2 \overline{\chi(g)}}{\chi(1)} ~ = ~ \frac{p-1}{p}- ~ \frac{1}{p-1}(|\phi(\zeta)+\phi(\zeta)^p|^2)[\overline{\phi(\omega)}+\overline{\phi(\omega)}^p]- $$
$$ \frac{1}{p-1} [\overline{\psi(\omega)}+\overline{\psi(\omega)}^p](1+Re \psi(\zeta)\overline{\psi(\zeta)}^p) \neq ~ 0.$$ 
\noindent Here $\zeta ~ = ~ x+\sqrt{\epsilon}y$ and $\omega ~ = ~ x^{\prime}+\sqrt{\epsilon}y^{\prime}$.
\item[Subcase 3.] If $3~|~ \frac{p+1}{2}$, then any element of order $3$ never belongs to the maximal solvable group of order $\frac{p(p-1)}{2}$. So we will assume \[
   X=
  \left[ {\begin{array}{cc}
   1 & 1 \\
   0 & 1 \\
  \end{array} } \right]
\]
or \[
   X=
  \left[ {\begin{array}{cc}
   1 & \epsilon \\
   0 & 1 \\
  \end{array} } \right]
\] where $|X|=p$, and 
\[
   g=
  \left[ {\begin{array}{cc}
   x & y\epsilon \\
   y & x \\
  \end{array} } \right]
\] and $|g|= ~ 3$. Then $\displaystyle\sum_{\chi \in Irr(G)} \frac{|\chi(x)|^2. \overline{\chi(g)}}{\chi(1)} = 1- ~ \frac{\overline{\phi(\zeta)}+\overline{\phi(\zeta)}^p}{p-1}- ~ \frac{1+p^2}{2(p-1)}(\overline{\psi(\zeta)}+\overline{\psi(\zeta)}^p) ~ \neq 0$.

\end{itemize}
\item[Case 3.] Consider $(1;4).$ As $4 \nmid \frac{p-1}{2}$, so $4 \nmid \frac{p.(p-1)}{2}$. So we take 
\[
   X=
  \left[ {\begin{array}{cc}
   1 & 1 \\
   0 & 1 \\
  \end{array} } \right]
\]
or \[
   X=
  \left[ {\begin{array}{cc}
   1 & \epsilon \\
   0 & 1 \\
  \end{array} } \right]
\] and 
\[
   g=
  \left[ {\begin{array}{cc}
   x & y\epsilon \\
   y & x \\
  \end{array} } \right]
\] and $|g|=4, ~ |X|=p$. Then $\displaystyle\sum_{\chi \in Irr(G)} \frac{|\chi(x)|^2. \overline{\chi(g)}}{\chi(1)} = 1- ~ \frac{\overline{\phi(\zeta)}+\overline{\phi(\zeta)}^p}{p-1}- ~ \frac{1+p^2}{2(p-1)}(\overline{\psi(\zeta)}+\overline{\psi(\zeta)}^p) ~ \neq 0$.
\item[Case 4.] Now we consider $(1;5)$.
  
\begin{itemize}
\item[Subcase 1.] First we assume $5 | p-1.$ Then $5 \nmid p+1$. \[
   X=
  \left[ {\begin{array}{cc}
   x^{\prime} & y^{\prime}\epsilon \\
   y^{\prime} & x^{\prime} \\
  \end{array} } \right]
\]
and \[
   g=
  \left[ {\begin{array}{cc}
   x & 0 \\
   0 & x^{-1} \\
  \end{array} } \right]
\] where $|X|= \frac{p+1}{2} \text{ and } |g|=5$. Now $$\sum_{\chi \in Irr(PSL_2(\mathbb{F}_p))}  \frac{|\chi(X)|^2 \overline{\chi(g)}}{\chi(1)} ~ = ~ 1+ ~ \frac{1}{q} \neq ~ 0.$$
\item[Subcase 2.] Again if $5 | p-1$, then we can take \[
   g=
  \left[ {\begin{array}{cc}
   x & y\epsilon \\
   y & x \\
  \end{array} } \right]
\] where $|g|=5$ and \[
   X=
  \left[ {\begin{array}{cc}
   x^{\prime} & y^{\prime}\epsilon \\
   y^{\prime} & x^{\prime} \\
  \end{array} } \right]
\] where $|X|= \frac{p+1}{2}$. Now $$\sum_{\chi \in Irr(PSL_2(\mathbb{F}_p))}  \frac{|\chi(X)|^2 \overline{\chi(g)}}{\chi(1)} ~ = ~ \frac{p-1}{p}- ~ \frac{1}{p-1}(|\phi(\zeta)+\phi(\zeta)^p|^2)[\overline{\phi(\omega)}+\overline{\phi(\omega)}^p]- $$
$$ \frac{1}{p-1} [\overline{\psi(\omega)}+\overline{\psi(\omega)}^p](1+Re \psi(\zeta)\overline{\psi(\zeta)}^p) \neq ~ 0.$$

\item[Subcase 3.] Now we assume $5|p+1$. So $5 \nmid p-1$.  We will take \[
   g=
  \left[ {\begin{array}{cc}
   x & y\epsilon \\
   y & x \\
  \end{array} } \right]
\] where $|g|=5$ and \[
   X=
  \left[ {\begin{array}{cc}
   1 & 1 \\
   0 & 1 \\
  \end{array} } \right]
\] or \[
   X=
  \left[ {\begin{array}{cc}
   1 & \epsilon \\
   0 & 1 \\
  \end{array} } \right]
\] where $|X|= p$.  $\displaystyle\sum_{\chi \in Irr(G)} \frac{|\chi(x)|^2. \overline{\chi(g)}}{\chi(1)} = 1- ~ \frac{\overline{\phi(z)}+\overline{\phi(z)}^p}{p-1}- ~ \frac{1+p^2}{2(p-1)}(\overline{\psi(\zeta)}+\overline{\psi(\zeta)}^p) ~ \neq 0$.
\end{itemize}
\item[Case 6.] Now we consider the case $(1;d)$ where $d$ is defined as above. If possible let $d$ is a common divisor of $\frac{p-1}{2} \text{ and } \frac{p+1}{2}$. \\
 So $p=2dl_1+1 ~ \text{ and } ~ p=2dl_1-1 \text{ for two positive integers } l_1,l_2 \\ \Rightarrow d(l_2-l_1)=1 \\ \Rightarrow d=1 $ which ia contradiction to $d \geq 7$. 
 \begin{itemize}
  \item[Subcase 1.] First we consider $d| \frac{p-1}{2}$.  \[
   g=
  \left[ {\begin{array}{cc}
   x & 0 \\
   0 & x^{-1} \\
  \end{array} } \right]
\] where $|g|= ~ d$ and \[
   X=
  \left[ {\begin{array}{cc}
   x & y\epsilon \\
   y & x \\
  \end{array} } \right]
\] where $|X| ~ = ~ \frac{p+1}{2}$. Then we have
$\displaystyle\sum_{\chi \in Irr(G)} \frac{|\chi(x)|^2. \overline{\chi(g)}}{\chi(1)} = 1+ ~ \frac{1}{p} ~ \neq 0$.
\item[Subcase 2.] Now we consider \[
   g=
  \left[ {\begin{array}{cc}
   x & y\epsilon \\
   y & x \\
  \end{array} } \right]
\] where $|g|=d$ and \[
   X=
  \left[ {\begin{array}{cc}
   x^{\prime} & y^{\prime}\epsilon \\
   y^{\prime} & x^{\prime} \\
  \end{array} } \right]
\] where $|X|= \frac{p+1}{2}$. Now $$\sum_{\chi \in Irr(PSL_2(\mathbb{F}_p))}  \frac{|\chi(X)|^2 \overline{\chi(g)}}{\chi(1)} ~ = ~ \frac{p-1}{p}- ~ \frac{1}{p-1}(|\phi(\zeta)+\phi(\zeta)^p|^2)[\overline{\phi(\omega)}+\overline{\phi(\omega)}^p]- $$
$$ \frac{1}{p-1} [\overline{\psi(\omega)}+\overline{\psi(\omega)}^p](1+Re \psi(\zeta)\overline{\psi(\zeta)}^p) \neq ~ 0.$$

\item[Subcase 3.] Now we account the case $d| \frac{p+1}{2}$. Then, \[
   g=
  \left[ {\begin{array}{cc}
   x & y\epsilon \\
   y & x \\
  \end{array} } \right]
\] where $|g|=d$ and \[
   X=
  \left[ {\begin{array}{cc}
   1 & \epsilon \\
   0 & 1 \\
  \end{array} } \right]
\] or \[
   X=
  \left[ {\begin{array}{cc}
   1 & 1 \\
   0 & 1 \\
  \end{array} } \right]
\] where $|X|= p$. So we have $\displaystyle\sum_{\chi \in Irr(G)} \frac{|\chi(x)|^2. \overline{\chi(g)}}{\chi(1)} = 1- ~ \frac{\overline{\phi(\zeta)}+\overline{\phi(\zeta)}^p}{p-1}- ~ \frac{1+p^2}{2(p-1)} \\(\overline{\psi(\zeta)}+\overline{\psi(\zeta)}^p) ~ \neq 0$.
\end{itemize}
\item[Case 7.] Consider the case $(1;\frac{p-1}{2})$.
 \begin{itemize}
 \item[Subcase 1.] First we consider \[
   g=
  \left[ {\begin{array}{cc}
   x & 0 \\
   0 & x^{-1} \\
  \end{array} } \right]
\] where $|g|= ~ \frac{p-1}{2}$ and \[
   X=
  \left[ {\begin{array}{cc}
   x & y\epsilon \\
   y & x \\
  \end{array} } \right]
\] where $|X| ~ = ~ \frac{p+1}{2}$. Then we have
$\displaystyle\sum_{\chi \in Irr(G)} \frac{|\chi(x)|^2. \overline{\chi(g)}}{\chi(1)} = 1+ ~ \frac{1}{p} ~ \neq 0$.
\item[Subcase 2.] Now we consider \[
   g=
  \left[ {\begin{array}{cc}
   x & y\epsilon \\
   y & x \\
  \end{array} } \right]
\] where $|g|=\frac{p-1}{2}$ and \[
   X=
  \left[ {\begin{array}{cc}
   x^{\prime} & y^{\prime}\epsilon \\
   y^{\prime} & x^{\prime} \\
  \end{array} } \right]
\] where $|X|= \frac{p+1}{2}$. Now $$\sum_{\chi \in Irr(PSL_2(\mathbb{F}_p))}  \frac{|\chi(X)|^2 \overline{\chi(g)}}{\chi(1)} ~ = ~ \frac{p-1}{p}- ~ \frac{1}{p-1}(|\phi(\zeta)+\phi(\zeta)^p|^2)[\overline{\phi(\omega)}+\overline{\phi(\omega)}^p]- $$
$$ \frac{1}{p-1} [\overline{\psi(\omega)}+\overline{\psi(\omega)}^p](1+Re \psi(\zeta)\overline{\psi(\zeta)}^p) \neq ~ 0.$$

 \end{itemize}
 \item[Case 8.] Now we account the case $(1;\frac{p+1}{2})$. Then, \[
   g=
  \left[ {\begin{array}{cc}
   x & y\epsilon \\
   y & x \\
  \end{array} } \right]
\] where $|g|=\frac{p+1}{2}$ and \[
   X=
  \left[ {\begin{array}{cc}
   1 & \epsilon \\
   0 & 1 \\
  \end{array} } \right]
\] or \[
   X=
  \left[ {\begin{array}{cc}
   1 & 1 \\
   0 & 1 \\
  \end{array} } \right]
\] where $|X|= p$. So we have $\displaystyle\sum_{\chi \in Irr(G)} \frac{|\chi(x)|^2. \overline{\chi(g)}}{\chi(1)} = 1- ~ \frac{\overline{\phi(\zeta)}+\overline{\phi(\zeta)}^p}{p-1}- ~ \frac{1+p^2}{2(p-1)} \\(\overline{\psi(\zeta)}+\overline{\psi(\zeta)}^p) ~ \neq 0$.
\item[Case 9.] Now we look upon the case $(1;p)$. \[
   g=
  \left[ {\begin{array}{cc}
   1 & \epsilon \\
   0 & 1 \\
  \end{array} } \right]
\] or \[
   g=
  \left[ {\begin{array}{cc}
   1 & 1 \\
   0 & 1 \\
  \end{array} } \right]
\] where $|g|= p$, and  \[
   X=
  \left[ {\begin{array}{cc}
   0 & -1 \\
   1 & 0 \\
  \end{array} } \right]
\] where $|X|= 2$. Now $\displaystyle\sum_{\chi \in Irr(G)} \frac{|\chi(x)|^2. \overline{\chi(g)}}{\chi(1)} = 1- ~ \frac{|\phi(-1)+\phi(-1)^p|^2}{p-1}+ ~ \frac{1+Re \psi(-1)\overline{\psi(-1)}^p}{p-1} ~ \neq 0$.
\end{enumerate}
\noindent Now we want to prove that $(1;2,3)$. As $(0;2,3,p)$ is a signature of $PSL_2(\mathbb{F}_p)$ \cite{oza}, so there exists a surface kernel epimorphisom $\phi ~ : ~ \Gamma (0;2,3,p) ~ \rightarrow ~ PSL_2(\mathbb{F}_{p})$ such that $\phi(c_{21})=e_{21},~ \phi(c_{31})=e_{31}, ~ \phi(c_{p1})=e_{p1}$ where $\Gamma(0;2,3,p)= ~ \langle c_{21},c_{31},c_{p1}|c_{21}c_{31}c_{p1}=1, ~ |c_{21}|=2,~ |c_{31}|=3,~ |c_{p1}|=p \rangle$. As $PSL_2(\mathbb{F}_{p})$ is non abelian, simple group. So the commutator subgroup $[PSL_2(\mathbb{F}_{p}),PSL_2(\mathbb{F}_{p})]=PSL_2(\mathbb{F}_{p})$. So $\exists ~ a,b ~ \in ~ PSL_2(\mathbb{F}_{p})$ such that $[a,b]= ~ e_{p1}$. Now we can define a map $\psi ~ : \Gamma(1;2,3) ~ \rightarrow ~ PSL_2(\mathbb{F}_{p})$ as $\psi(\alpha_1)~ = ~ a, ~ \psi(\beta_1)~ = ~ b, ~ \psi(c_{21}) ~ = ~ e_{21}, ~ \psi(c_{31}) ~ = ~ e_{31}$ where $\Gamma(1;2,3)=\langle \alpha_1,\beta_1,c_{21},c_{31}| ~ [\alpha_1,\beta_1]c_{21}c_{31}=1, ~ |c_{21}|=2, ~ |c_{31}|=3\rangle$. Clearly $\psi$ is surface kernel epimorphisom. Hence $(1;2,3)$ is a signature of $PSL_2(\mathbb{F}_{p})$. Using the above argument we can easily prove that $(1;m_1,m_2)$ is a signature of $PSL_2(\mathbb{F}_p)$ where $m_1,m_2 ~ \in ~ \lbrace ~ 2,~ 3,~ 4,~ 5,~ d,~ \\ \frac{p-1}{2},~ \frac{p+1}{2},~ p \rbrace$. Repeating the above argument and from \ref{h=0} it is now obvious that $(1;~ 2^{[a_2]},~ 3^{[a_3]},~ 4^{[a_4]},~ 5^{[a_5]},~ d^{[a_d]},~ \\ \frac{p-1}{2}^{[a_{\frac{p-1}{2}}]},~ \frac{p+1}{2}^{[a_{\frac{p+1}{2}}]},~ p^{[a_p]})$ is a signature for $PSL_2(\mathbb{F}_p)$ for $P ~ \equiv ~ 3 ~ (mod ~ 4)$ if and only if atleast one of $a_i ~ \geq ~ 1$. 

\end{proof}
\begin{lemma}
$(h_{\geq ~ 2};~ 2^{[a_2]},~ 3^{[a_3]},~ 4^{[a_4]},~ 5^{[a_5]},~ d^{[a_d]},~ \frac{p-1}{2}^{[a_{\frac{p-1}{2}}]},~ \frac{p+1}{2}^{[a_{\frac{p+1}{2}}]},~ p^{[a_p]})$ is a signature for $PSL_2(\mathbb{F}_p)$ for $P ~ \equiv ~ 3 ~ (mod ~ 4)$ if and only if atleast one of $a_i ~ \geq ~ 0$. \label{h=2}
\end{lemma}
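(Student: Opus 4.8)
The plan is to reduce everything to the already-settled cases $h=1$ (Lemma \ref{h=1}) and $h=2$, via two elementary operations on surface--kernel epimorphisms. Observe first that the forward implication is vacuous: the exponents $a_i$ are non-negative by definition, so ``at least one $a_i\ge 0$'' always holds. Thus the real content is that \emph{every} tuple $(h;\, 2^{[a_2]},\dots,p^{[a_p]})$ with $h\ge 2$ is realized as a signature of $G:=PSL_2(\mathbb{F}_p)$, $p\equiv 3\pmod 4$.

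\textbf{Adding trivial handles.} Suppose $(h_0;\,m_1,\dots,m_r)$ is a signature, realized by a surface--kernel epimorphism $\phi:\Gamma(h_0;m_1,\dots,m_r)\to G$ with $\phi(\alpha_i)=A_i$, $\phi(\beta_i)=B_i$, $\phi(c_j)=C_j$, so that $\prod_{i=1}^{h_0}[A_i,B_i]\prod_{j=1}^{r}C_j=1$, each $|C_j|=m_j$, and $\langle A_i,B_i,C_j\rangle=G$. For any $h\ge h_0$ define $\phi':\Gamma(h;m_1,\dots,m_r)\to G$ by keeping $\phi$ on the first $h_0$ pairs of handle generators and on all $c_j$, and sending $\alpha_k,\beta_k\mapsto 1$ for $h_0<k\le h$. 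Since $[1,1]=1$ the long relator is still killed; $\phi'$ is onto because it agrees with $\phi$ on a generating set; and it preserves the order of every torsion element, each of which is conjugate into some $\langle c_j\rangle$. On the Riemann--Hurwitz side, inserting one trivial handle replaces the orbit genus datum $g$ by $g+|G|$, which stays a non-negative integer. Hence $(h;\,m_1,\dots,m_r)$ is a signature for all $h\ge h_0$.

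\textbf{The two base cases.} If at least one $a_i\ge 1$, then Lemma \ref{h=1} already gives that $(1;\,2^{[a_2]},\dots,p^{[a_p]})$ is a signature of $G$, and adding $h-1\ge 1$ trivial handles yields the claim for every $h\ge 2$. If all $a_i=0$, we must realize $(h;-)$, and by the handle operation it suffices to treat $h=2$, i.e.\ to produce a surjection $\langle \alpha_1,\beta_1,\alpha_2,\beta_2\mid [\alpha_1,\beta_1][\alpha_2,\beta_2]\rangle\twoheadrightarrow G$. Here I would use that $G$ is $2$--generated (indeed $(0;2,3,p)$ is a signature of $G$, so $G=\langle a,b\rangle$ for suitable $a,b$) together with the fact that $G$ has commutator width one, i.e.\ every element of $G$ is a single commutator. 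Granting the latter, write $[a,b]^{-1}=[c,d]$ and put $A_1=a$, $B_1=b$, $A_2=c$, $B_2=d$: then $[A_1,B_1][A_2,B_2]=1$ and $\langle A_1,B_1\rangle=G$, so this is a surface--kernel epimorphism, and the Riemann--Hurwitz equation reads $2g-2=2|G|$, giving $g=|G|+1\in\mathbb{N}$. Thus $(2;-)$ is a signature, and combined with handle addition this settles the case $a_i\equiv 0$; the lemma follows.

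\textbf{Main obstacle.} The only point that needs genuine justification is that $PSL_2(\mathbb{F}_p)$ has commutator width one. This is classical; the cleanest in-house argument is to apply Frobenius' counting formula, namely that the number of pairs $(x,y)\in G\times G$ with $[x,y]=g$ equals $|G|\sum_{\chi\in Irr(G)}\chi(g)/\chi(1)$, to the character table of $PSL_2(\mathbb{F}_p)$ displayed above, and to check that this sum is strictly positive for every $g\in G$. Everything else is bookkeeping: the two handle operations, and the remark that Riemann--Hurwitz integrality never has to be re-examined, since we only ever add copies of $|G|$ to an orbit genus already known to be integral from Lemmas \ref{h=0} and \ref{h=1}.
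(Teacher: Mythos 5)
Your skeleton is the same as the paper's: establish base cases and then absorb extra genus by sending new handle generators to the identity, with the observation (correct, and worth making explicit) that the stated condition ``at least one $a_i\ge 0$'' is vacuous, so the content is that every such tuple with $h\ge 2$ is realized. The one place you diverge is the case $a_i\equiv 0$, i.e.\ realizing $(2;-)$. There you invoke the fact that $PSL_2(\mathbb{F}_p)$ has commutator width one, writing $[a,b]^{-1}=[c,d]$, and you explicitly leave that fact unproved (``granting the latter''). That claim is true and classical, and your suggested Frobenius-formula verification would work, but as written it is the load-bearing step of your argument and it is not carried out; moreover it is entirely unnecessary. The paper's construction for $(h_{\ge 2};-)$ is the same trick you already use for handle addition, applied from the start: take a generating pair $e_{21},e_{31}$ of $G$ (coming from the $(0;2,3,p)$ epimorphism), and define $\psi:\Gamma(h;-)\to G$ by $\psi(\alpha_1)=e_{21}$, $\psi(\alpha_2)=e_{31}$, and $\psi(\beta_i)=\psi(\alpha_j)=1$ for all other generators. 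Every commutator $[\psi(\alpha_i),\psi(\beta_i)]$ is then trivial, so the long relator dies automatically, surjectivity is clear, and there is no torsion to preserve since $\Gamma(h;-)$ is a surface group. This needs $h\ge 2$ only to have two $\alpha$'s available, which is exactly the hypothesis. I would recommend replacing your commutator-width step with this construction (or, equivalently, setting $A_1=a$, $B_1=1$, $A_2=b$, $B_2=1$ in your own notation); otherwise your proof carries an unverified nontrivial input where a one-line assignment suffices. The rest --- handle addition preserving surjectivity, order-preservation on torsion via conjugacy into the $\langle c_j\rangle$, and the Riemann--Hurwitz integrality bookkeeping --- matches the paper and is fine.
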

\begin{proof}
\begin{enumerate}
\item[Case 1.] We first prove that $(h_{\geq ~ 2};-)$ is a signature of $PSL_2(\mathbb{F}_p)$. As it is clear that $(0;2,3,p)$ is a signature of $PSL_2(\mathbb{F}_p)$ from \ref{h=0}, so there exists a surface kernel epimorphisom $\phi ~ : ~ \Gamma (0;2,3,p) ~ \rightarrow ~ PSL_2(\mathbb{F}_{p})$ such that $\phi(c_{21})=e_{21},~ \phi(c_{31})=e_{31}, ~ \phi(c_{p1})=e_{p1}$ where $\Gamma(0;2,3,p)= ~ \langle c_{21},c_{31},c_{p1}|c_{21}c_{31}c_{p1}=1, ~ |c_{21}|=2,~ |c_{31}|=3,~ |c_{p1}|=p \rangle$. Now consider a map $$\psi: ~ \Gamma(h_{\geq ~ 2};-) ~ \rightarrow ~ PSL_2(\mathbb{F}_p)$$ defined as $\psi(a_1)=e_{21}, ~ \psi(a_2)=e_{31}, ~ \psi(b_i)=id; 1 ~ \leq i ~\leq h, \psi(a_i)=~ id; ~ 3 ~ \leq ~ i ~ \leq h$, where $\Gamma(h_{\geq 2};-) ~ = ~ \langle a_1,~b_1,~a_2,~b_2,~\dots~ a_h,~b_h|\prod_{i=1}^h [a_i,b_i]=1 \rangle $. Clearly $\psi$ is a surface kernel epimorphism. So $(h_{\geq ~2};-)$ is a signature of $PSL_2(\mathbb{F}_p)$.
\end{enumerate}
\noindent Now using \ref{h=0}, and \ref{h=1} we can prove easily that $(h_{\geq ~ 2};~ 2^{[a_2]},~ 3^{[a_3]},~ 4^{[a_4]},~ \\
5^{[a_5]},~ d^{[a_d]},~ \frac{p-1}{2}^{[a_{\frac{p-1}{2}}]},~ \frac{p+1}{2}^{[a_{\frac{p+1}{2}}]},~ p^{[a_p]})$ is a signature for $PSL_2(\mathbb{F}_p)$ for $P ~ \equiv ~ 3 ~ (mod ~ 4)$ if and only if atleast one of $a_i ~ \geq ~ 0$.  
\label{h=2.}
\end{proof}
\begin{lemma}{(Key lemma)}
$(h_{\geq ~ 0};~ 2^{[a_2]},~ 3^{[a_3]},~ 4^{[a_4]},~ 5^{[a_5]},~ d^{[a_d]},~ \frac{p-1}{2}^{[a_{\frac{p-1}{2}}]},~ \frac{p+1}{2}^{[a_{\frac{p+1}{2}}]},~ p^{[a_p]})$ is a signature for $PSL_2(\mathbb{F}_p)$ for $P ~ \equiv ~ 3 ~ (mod ~ 4)$ if and only if $$2(h-1)+~\frac{a_2-1}{2}~ + \frac{2a_3-1}{3} + ~ \frac{3a_4}{4} +~ \frac{4a_5}{5} +~ \frac{(d-1)a_d+1}{d} ~+ \frac{a_{\frac{p-1}{2}}(p-3)}{p-1} ~+ \frac{a_{\frac{p+1}{2}}(p-1)}{p+1} ~+\frac{(p-1)a_p}{p} ~ \geq 0$$ or $$20(h-1) ~ + 10[\frac{a_2}{2} ~ +2\frac{a_3}{3} ~+3\frac{a_4}{4} ~+4\frac{a_5}{5} ~+\frac{(d-1)a_d}{d} ~+\frac{(p-3)a_{\frac{p-1}{2}}}{p-1} ~+\frac{(p-1)a_{\frac{p+1}{2}}}{p+1} ~+\frac{(p-1)a_p}{p} ] ~ \geq ~ 1 $$ when $p ~ \geq ~ 13, ~ p ~ \equiv ~ \pm ~ 1(\mod ~ 5),~ p ~ \not \equiv ~ \pm ~ 1(\mod ~ 8), ~ \text{and} ~ d \geq 15$. 
\label{h}
\end{lemma}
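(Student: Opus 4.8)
The plan is to read off the criterion from the Harvey condition (Lemma~\ref{Harvey condition}) together with the three preceding lemmas \ref{h=0}, \ref{h=1}, \ref{h=2}. For a prospective signature $\sigma$ set $\mathcal M(\sigma)=2h-2+\sum_i a_i(1-\tfrac1{m_i})$, the sum running over the eight admissible orders $m_i\in\{2,3,4,5,d,\tfrac{p-1}{2},\tfrac{p+1}{2},p\}$. Riemann--Hurwitz reads $2g-2=\tfrac{p(p^2-1)}{2}\mathcal M(\sigma)$; since each admissible period divides $\tfrac{p(p^2-1)}{2}$ one checks that $g$ comes out a non-negative integer for every admissible period multiset, so the sole arithmetic constraint is $g\ge 2$. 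Here I would use the standard fact that a non-abelian finite simple group acts faithfully on no closed surface of genus $0$ or $1$ (orientation-preserving finite groups on $S^2$ are cyclic, dihedral, $A_4$, $S_4$ or $A_5$; those on the torus are virtually abelian), together with $PSL_2(\mathbb{F}_p)\cong A_5$ only for $p=5$: for the primes in the statement every faithful action of $PSL_2(\mathbb{F}_p)$ has $g\ge 2$, i.e. $\mathcal M(\sigma)>0$, the value $\mathcal M(\sigma)=0$ being forbidden.

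First I would rewrite the two displayed inequalities in terms of $\mathcal M(\sigma)$. Using $\tfrac{a_2-1}{2}=a_2(1-\tfrac12)-\tfrac12$, $\tfrac{2a_3-1}{3}=a_3(1-\tfrac13)-\tfrac13$, $\tfrac{(d-1)a_d+1}{d}=a_d(1-\tfrac1d)+\tfrac1d$, and $\tfrac{p-3}{p-1}=1-\tfrac2{p-1}$, $\tfrac{p-1}{p+1}=1-\tfrac2{p+1}$, the first displayed inequality is exactly $\mathcal M(\sigma)\ge\tfrac56-\tfrac1d$ and the second is exactly $\mathcal M(\sigma)\ge\tfrac1{10}$. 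Since $d\ge 15$ gives $\tfrac56-\tfrac1d\ge\tfrac{23}{30}>\tfrac1{10}$, the first implies the second, so the disjunction in the lemma reduces to the single inequality $\mathcal M(\sigma)\ge\tfrac1{10}$; it suffices to prove that $\sigma$ is a signature for $PSL_2(\mathbb{F}_p)$ if and only if $\mathcal M(\sigma)\ge\tfrac1{10}$.

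I would then split on $h$. For $h\ge 2$, Lemma~\ref{h=2} gives a surface-kernel epimorphism for any choice of the $a_i$, and $\mathcal M(\sigma)\ge 2>\tfrac1{10}$. For $h=1$, $\mathcal M(\sigma)=\sum_i a_i(1-\tfrac1{m_i})$ with every nonzero summand $\ge\tfrac12$, so ``$\mathcal M(\sigma)>0$'' $=$ ``some $a_i\ge 1$'' $=$ ``$\sigma$ is a signature'' by Lemma~\ref{h=1}, and then $\mathcal M(\sigma)\ge\tfrac12$. The substantive case is $h=0$, where $\mathcal M(\sigma)=\sum_i a_i(1-\tfrac1{m_i})-2$: by Lemma~\ref{h=0} a surface-kernel epimorphism exists iff $\sum_i a_i(1-\tfrac1{m_i})\ge 2$, the genus-$\ge2$ requirement makes this strict, and I must sharpen it to $\sum_i a_i(1-\tfrac1{m_i})\ge\tfrac{21}{10}$ --- equivalently, no admissible genus-$0$ signature has $\mathcal M(\sigma)\in(0,\tfrac1{10})$. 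This is a short finite check: a two-period signature has cyclic quotient, so $r\ge3$; with $r=3$ and periods in $\{2,3,5\}$ (the hypothesis $p\not\equiv\pm1\pmod 8$ rules out order $4$) the only multisets with $\sum_i(1-\tfrac1{m_i})\le\tfrac{21}{10}$ are $(2,3,5)$ (sum $\tfrac{59}{30}<2$, excluded), $(3,3,3)$ (sum $2$, genus $1$, excluded) and $(2,5,5)$ (sum $\tfrac{21}{10}$, admissible since $p\equiv\pm1\pmod 5$ supplies order-$5$ elements); with $r=3$ and a large period $q\in\{d,\tfrac{p-1}{2},\tfrac{p+1}{2},p\}$ present, $d\ge15$ forces all of these to be $\ge15$ (in particular $\tfrac{p-1}{2},\tfrac{p+1}{2}\ge15$ and $p\ge31$), so $(2,3,q)$ gives $\mathcal M(\sigma)=\tfrac16-\tfrac1q\ge\tfrac1{10}$ and every other admissible triple gives $\mathcal M(\sigma)>\tfrac1{10}$; with $r\ge4$, $\sum_i(1-\tfrac1{m_i})=2$ only for $(2,2,2,2)$ (genus $1$, excluded) and is $\ge\tfrac{13}{6}$ otherwise, giving $\mathcal M(\sigma)\ge\tfrac16$. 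Hence $\mathcal M(\sigma)\ge\tfrac1{10}$; conversely $\mathcal M(\sigma)\ge\tfrac1{10}$ gives $\sum_i a_i(1-\tfrac1{m_i})\ge\tfrac{21}{10}>2$, so Lemma~\ref{h=0} applies and $g\ge2$.

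The step I expect to be the main obstacle is precisely this genus-$0$ enumeration, since it needs the exact list of period multisets \emph{realized} by surface-kernel epimorphisms --- which rests on the maximal-subgroup classification of $PSL_2(\mathbb{F}_p)$ and the character-sum computations used in Lemmas \ref{h=0}--\ref{h=1} --- together with the arithmetic fact that the four large admissible periods are all $\ge15$ under the standing hypotheses, so that $\tfrac16-\tfrac1q\ge\tfrac1{10}$. The constant $\tfrac1{10}$ in the lemma is nothing but $\min\{\tfrac16-\tfrac1{15},\ \tfrac{21}{10}-2\}$, attained at $(0;2,3,15)$ when $d=15$ and at $(0;2,5,5)$; the first displayed inequality $\mathcal M(\sigma)\ge\tfrac56-\tfrac1d$ is merely a convenient sufficient condition, which one can also establish directly by constructing $\sigma$ from the base signature $(0;2,3,d)$ through the extension principle.
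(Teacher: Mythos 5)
Your proposal is correct and follows essentially the same route as the paper, whose entire proof of the key lemma is the single line that it ``follows from Lemmas \ref{h=0}, \ref{h=1}, \ref{h=2}'' --- exactly the genus split $h=0$, $h=1$, $h\geq 2$ you use. The extra work you supply (rewriting both displayed inequalities as bounds on the Riemann--Hurwitz quantity $\mathcal{M}(\sigma)$, observing that the disjunction collapses to $\mathcal{M}(\sigma)\geq \tfrac{1}{10}$, and checking that no realizable genus-$0$ signature lands in $(0,\tfrac{1}{10})$) is precisely the detail the paper omits, and it is needed to close the gap between the threshold $\sum a_i(1-\tfrac{1}{m_i})\geq 2$ of Lemma \ref{h=0} and the constant $\tfrac{1}{10}$ in the statement.
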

\begin{proof}
The proof follows from \ref{h=0}, \ref{h=1}, \ref{h=2}.
\end{proof}

\section{Growth of the family groups $PSL_2(\mathbb{F}_p)$ when $p ~ \equiv ~ 3 ~(mod 4)$}
\noindent We are already witnessed that $(h_{\geq ~ 1};3)$ is a signature of $PSL_2(\mathbb{F}_p)$. So there exists a surface kernel epimorphism from the Fuchsian group $\Gamma(h_{\geq ~ 1};3)$ to $PSL_2(\mathbb{F}_p)$ for $p ~ \equiv ~3 ~ (mod ~4)$. First we study the Growth of the Fuchsian group $\Gamma(h_{\geq ~ 1};3)$.\\
\noindent If we consider a $(4n, ~ \frac{2\pi}{12n})$ gon and identify the alternate edges, then after identification we have a surface of genus $n$, with a cone point of order $3$. Thus the Fuchsian group that acts on the upper half plane has the signature $(n;3)$. We explicitely have the growth function for $(4n, ~ \frac{2\pi}{12n})$ gon \cite{gof}. The growth function for the $(4n, ~ \frac{2\pi}{12n})$ gon is a rational function, and it is given by $$g(z)= ~ \frac{z^{6n}+2z^{6n-1}+2z^{6n-2}+ \dots + 2z^2+2z+1}{z^{6n}+(2-4n)z^{6n-1}+(2-4n)z^{6n-2}+\dots+(2-4n)z^{2}+(2-4n)z+1}. $$ 
\noindent We tote the notation of \cite{gof}, $a_n= |~\lbrace g \in \Gamma(n;3)|l(g)=n \rbrace~|$. Then  $\gamma(n)= a_0+a_1+\dots+a_n$. Here $a_k \sim (4n)^k$, and hence $\gamma_k \sim (4n)^{k+1}$. So $\Gamma(n;3)$ has exponential growth. Hence the family $(PSL_2(\mathbb{F}_p))$ has exponential growth when $p \equiv 3~(mod ~4)$.\\

\noindent Now we consider $(4n;~\frac{2\pi}{4n})$ gon. After identifying the alternate edges we will have a surface of genus $n$. So the Fuchsian group that act on upper half plane $\mathbb{H}$ is $\Gamma(n;-)$. Then the growth function of the Fuchsian group $\Gamma(n_{\geq 2};-)$ is given by
$$g(z)= ~ \frac{z^{2n}+2z^{2n-1}+2z^{2n-2}+ \dots + 2z^2+2z+1}{z^{2n}+(2-4n)z^{2n-1}+(2-4n)z^{2n-2}+\dots+(2-4n)z^{2}+(2-4n)z+1}. $$ 
\noindent We follow the notation of \cite{gof}, $a_n= |~\lbrace g \in \Gamma(n;3)|l(g)=n \rbrace~|$. Then  $\gamma(n)= a_0+a_1+\dots+a_n$. Here $a_k \sim (4n)^k$, and hence $\gamma_k \sim (4n)^{k+1}$. So $\Gamma(n;-)$ has exponential growth. Hence the family $(PSL_2(\mathbb{F}_p))$ has exponential growth when $p \equiv 3~(mod ~4)$.
   
\noindent It is an well known fact that Fuchsian groups are residually finite group. Also the generating set for the Fuchsian group $\Gamma(n;3)$ or $\Gamma(n;-)$ is finite. The family of finite groups $PSL_2(\mathbb{F}_{p_{\equiv ~3 ~ (mod 4)}})$ consisting of finite quotients of $\Gamma(n;3)$ or $\Gamma(n;-)$ with respective generating sets $S_p$ where $S_p$ is the epimorphic image of the generating set of $\Gamma(n;3)$ or $\Gamma(n;-)$ under the mapping $\Gamma(n;3) ~ \rightarrow ~ PSL_2(\mathbb{F}_p)$ or $\Gamma(n;-) ~ \rightarrow ~ PSL_2(\mathbb{F}_p)$. Then the growth funtions of the Fuchsian group, and the family $PSL_2(\mathbb{F}_{p_{\equiv ~3 ~ (mod 4)}})$ are same, and hence exponential. 
\section{Acknowledgment}
\noindent This research was supported/partially supported by the Council of Scientific \& Industrial Research. We are thankful to our colleagues and seniors Dr. Suratno Basu, Dr. Manish Kumar Pandey, Apeksha Sanghi, Prof. V. Kannan, and Mr. Aditya tiwari who provided expertise that greatly assisted the research.
We are also grateful to Dr. Manish Kumar Pandey who moderated this paper and in that line improved the manuscript significantly.

\end{document}